\documentclass[11pt,a4,onesided]{amsart}

\usepackage{hyperref}
\usepackage{amsfonts}
\usepackage{amssymb}
\usepackage{amsmath}
\usepackage{amsthm}
\usepackage{latexsym}
\usepackage{color}
\usepackage{amscd}
\usepackage[all,cmtip]{xy}
\usepackage{dsfont}
\usepackage{mathrsfs}
\usepackage{graphicx}
\usepackage{enumitem} 
\usepackage[margin=1.6in]{geometry}
\usepackage{esint}
\usepackage{epsfig}

\setcounter{tocdepth}{1}

\allowdisplaybreaks




\numberwithin{equation}{section}
\newcounter{exercise}


\newtheorem{theorem}{Theorem}
\newtheorem{proposition}[theorem]{Proposition}
\newtheorem{lemma}[theorem]{Lemma}

\newtheorem{remark}[theorem]{Remark}

\theoremstyle{definition}
\newtheorem{definition}[theorem]{Definition}

\theoremstyle{remark}


\newcommand\R{{\mathbb R}}

\newcommand{\nx}{\nabla_x}
\newcommand{\ny}{\nabla_y}
\newcommand{\iy}{\int_Y}
\newcommand{\iti}{\int_0^T}
\newcommand{\iR}{\int_{\R^d}}





\def\ts{\stackrel{2-drift}{\relbar\joinrel\relbar\joinrel\relbar\joinrel\relbar\joinrel\rightharpoonup}}
\def\tss{\stackrel{2s-drift}{\relbar\joinrel\relbar\joinrel\relbar\joinrel\relbar\joinrel\rightharpoonup}}

\def\div{{\rm div}}
\def\dsp{\displaystyle}



\setlength{\marginparwidth}{.8in}
\let\oldmarginpar\marginpar
\renewcommand\marginpar[1]{\-\oldmarginpar[\raggedleft\footnotesize #1]%
{\raggedright\footnotesize #1}}

\newcommand{\ve}{\varepsilon}

\renewcommand{\a}{\alpha}
\renewcommand{\b}{\beta}
\renewcommand{\v}{\varphi}

\def\signga{\bigskip {\small \sc
      CMAP, UMR CRNS 7641, \'Ecole Polytechnique,
      Route de Saclay, Palaiseau F91128, France \par E-mail:}
    \tt{\small gregoire.allaire@polytechnique.fr} }

\def\signhh{\bigskip {\small \sc
      DPMMS, CMS, University of Cambridge,
      Wilberforce road, Cambridge CB3 0WB, UK
      \par E-mail:}
    \tt{\small H.Hutridurga@dpmms.cam.ac.uk} }

\begin{document}

\title[Multicomponent Transport]{On the homogenization of multicomponent
transport}

\author{Gr\'egoire Allaire, Harsha Hutridurga}

\date{\today}

\maketitle

\begin{abstract}
This paper is devoted to the homogenization of weakly coupled cooperative
parabolic systems in strong convection regime with purely periodic
coefficients. Our approach is to factor out oscillations from the solution via
principal eigenfunctions of an associated spectral problem and to cancel any
exponential decay in time of the solution using the principal eigenvalue of the
same spectral problem. We employ the notion of two-scale convergence with drift
in the asymptotic analysis of the factorized model as the lengthscale of the
oscillations tends to zero. This combination of the factorization method and
the method of two-scale convergence is applied to upscale an adsorption model
for multicomponent flow in an heterogeneous porous medium.
\end{abstract}


\section{Introduction}
\label{sec:int}
Upscaling reactive transport models in porous media is a problem of great practical
importance and homogenization theory is a method of choice for achieving this goal
(see \cite{HOR} and references therein). In this paper we focus on a model problem
of reactive multicomponent transport for $N$ diluted chemical species in a saturated
periodically varying media. The fluid velocity is assumed to be known.
On top of usual convective and diffusive effects we consider linear reaction terms
which satisfy a specific condition, namely that the reaction matrix is cooperative
(see the precise definition in Section \ref{sec:model}). This assumption is quite 
natural for a linear system, as we consider here, since it ensures a maximum (or 
positivity) principle for solutions which, being concentrations, should indeed be 
non-negative for obvious physical reasons. As usual the ratio between the period 
of the coefficients and a characteristic lengthscale of the porous domain is 
denoted by a small parameter $0<\ve\ll1$. Denoting the unknown concentrations 
by $u^\ve_\a$, for $1\le\a\le N$, we study in the entire space $\R^d$ the following
weakly coupled (i.e., no coupling in the derivatives) system of $N$ parabolic
equations with periodic bounded coefficients:
\begin{equation}
\label{eq:intro}
\rho_\a\Big(\frac{x}{\ve}\Big)\frac{\partial u^\ve_\a}{\partial t} +
\frac{1}{\ve}b_\a\Big(\frac{x}{\ve}\Big)\cdot\nabla u^\ve_\a -
\div\Big(D_\a\Big(\frac{x}{\ve}\Big)\nabla u^\ve_\a\Big) +
\frac{1}{\ve^2}\sum_{\b=1}^N\Pi_{\a\b}\Big(\frac{x}{\ve}\Big)u^\ve_\b = 0,
\end{equation}
for $1\le\a\le N$, where $b_\a$ are velocity fields, $D_\a$ are symmetric and
coercive diffusion tensors and $\Pi$ is the reaction (or coupling) matrix,
assumed
to be cooperative (see (\ref{eq:ass:irr}) for a precise definition).
All coefficients are $Y$-periodic, where $Y:=]0,1[^d$ is the unit cell in
$\R^d$.
Our main result, Theorem \ref{thm:hom}, states that a solution to the Cauchy
problem
for (\ref{eq:intro}) admits the following asymptotic representation (for every
$1\le\a\le N$):
$$
u^\ve_\a(t,x)=\v_\a\Big(\frac{x}{\ve}\Big)\exp{(-\lambda t/\ve^2)}\Big(v\Big(t,
x-\frac{b^*t}{\ve}\Big)+\mathcal{O}(\ve)\Big),
$$
where $\{\lambda,(\varphi_\a)_{1\le\a\le N}\}$ is the first eigenpair for a
periodic system
posed in the unit cell $Y:=]0,1[^d$, $b^*$ is a constant drift vector and
$v(t,x)$ solves
a scalar parabolic homogenized problem with constant coefficients. Our result
generalizes the works \cite{AR:07} and \cite{DP:05}, which were restricted to a
single (scalar) parabolic equation. In \cite{Ca:00}, \cite{Ca:02} a similar
result was obtained for a cooperative elliptic system without convective terms.
Our present work is thus the first to combine large convective terms and
multiple equations.

Let us explain the specific $\ve$-scaling of the coefficients in
(\ref{eq:intro}), which
is not new and is well explained, e.g., in \cite{AMP}. Before
adimensionalization, the
physical system of equations, in original time-space coordinates $(\tau,y)$,
is,
for $1 \le \alpha \le N$,
$$
\rho_\a\frac{\partial u_\a}{\partial\tau} + b_\a\cdot\nabla u_\a -
\div(D_\a\nabla u_\a) + \sum_{\b=1}^N \Pi_{\a\b}u_\b = 0 .
$$
Interested by a macroscopic view and long time behaviour of this parabolic
system,
we perform a ``parabolic'' scaling of the time-space variables, i.e.,
$(\tau,y)\to(\ve^{-2}t,\ve^{-1}x)$, which yields the scaled model
(\ref{eq:intro}).

\begin{remark}
Another scaling that one could consider is the ``hyperbolic'' scaling, 
i.e., $(\tau,y)\to(\ve^{-1}t,\ve^{-1}x)$. This has been
addressed in \cite{MS:13} (for $N=2$) where the scaled system is:
$$
\rho_\a\Big(\frac{x}{\ve}\Big)\frac{\partial u^\ve_\a}{\partial t} + 
b_\a\Big(\frac{x}{\ve}\Big) \cdot\nabla u^\ve_\a - 
\ve\div\Big(D_\a\Big(\frac{x}{\ve}\Big)\nabla u^\ve_\a\Big) + 
\frac1{\ve}\sum_{\b=1}^N \Pi_{\a\b}\Big(\frac{x}{\ve}\Big)u^\ve_\b = 0 ,
$$
for $1\le\a\le N$. The main result of \cite{MS:13} is that the solution to the
Cauchy problem for the above system admits the asymptotic representation:
$$
u^\ve_\a(t,x) \approx \phi_\a\Big(\frac{x}{\ve}\Big)\delta(x-b^*t)
$$
where $\phi_\a$ is the first eigenfunction and there is no time exponential 
because $\lambda=0$ happens to be the first eigenvalue for the specific 
choice of cooperative matrix $\Pi_{\a\b}$ made in \cite{MS:13}. In the 
above equation $\delta$ is the Dirac mass which appears because of a 
concentration assumption on the initial data. The main difference 
with the parabolic scaling in our work is that there is no diffusion 
homogenized problem. The drift velocity can be interpreted as $b^*=\nabla H(0)$ 
with $H$ being some effective Hamiltonian. 
\end{remark}

The organization of this paper is as follows. In Section \ref{sec:model}, we
describe the mathematical model of cooperative parabolic systems and the
precise hypotheses made on the coefficients. Section \ref{sec:qa} briefly 
recalls the existence and uniqueness theory for system (\ref{eq:intro}). 
Since no uniform a priori estimates can be obtained for (\ref{eq:intro}), 
a factorization principle (or change of unknowns) is performed in 
Section \ref{sec:fp}. Then, uniform a priori bounds are deduced for 
the solution of this factorized problem. The definition of two-scale 
convergence with drift is recalled in Section \ref{sec:2scl}. Then, 
based on the uniform a priori estimates of Section \ref{sec:fp}, 
we obtain a two-scale compactness result for the sequence of solutions 
(see Theorem \ref{thm:3:2scl}). Our main homogenization result is 
Theorem \ref{thm:hom} which is proved in Section \ref{sec:hom}. Eventually, 
Section \ref{sec:apm} generalizes our analysis to a similar, but more 
involved, system which is meaningful from a physical point of view. 
The differences are that $(i)$ the convection-diffusion takes place 
in a perforated porous medium and $(ii)$ the chemical reactions are 
localized on the holes' boundaries rather than in the fluid bulk. 
This is a frequent case for adsorption or deposition of the chemical 
on the solid surface (cf. the discussion and references in \cite{AMP}).

\section{The model}
\label{sec:model}
Before we present our model, let us introduce the following shorthands:
$$
\rho^\ve_\a(x):=\rho_\a\Big(\frac{x}{\ve}\Big);\hspace{0.5
cm}b^\ve_\a(x):=b_\a\Big(\frac{x}{\ve}\Big);\hspace{0.5
cm}D^\ve_\a(x):=D_\a\Big(\frac{x}{\ve}\Big);\hspace{0.5
cm}\Pi_{\a\b}^\ve(x):=\Pi_{\a\b}\Big(\frac{x}{\ve}\Big),
$$
where the small positive parameter $\ve\ll1$ represents the lengthscale of
oscillations. We consider the following Cauchy problem:
\begin{equation}
\label{eq:cd}
\dsp\rho^\ve_\a \frac{\partial u^\ve_\a}{\partial t} + \frac1\ve
b^\ve_\a\cdot\nabla u^\ve_\a - \div(D^\ve_\a\nabla u^\ve_\a) +
\frac1{\ve^2}\sum_{\b=1}^N \Pi^\ve_{\a\b}u^\ve_\b = 0\hspace{0.1 cm}\mbox{ in
}\hspace{0.1 cm}(0,T)\times\R^d,
\end{equation}
\begin{equation}
\label{eq:in}
u^\ve_\a(0,x)=u^{in}_\a(x)\hspace{0.1 cm}\mbox{ for }x\in\R^d.
\end{equation}
For a normed vector space $\mathcal H$,
we use the following standard notation for $Y$-periodic function spaces:
$$
L^p_\#(\R^d;\mathcal H):=\Big\{f:\R^d\to\mathcal H \mbox{ s.t. }f\mbox{ is
}Y\mbox{-periodic}
\mbox{ and } \|\|f\|_{\mathcal H}\|_{L^p(Y)}<\infty \Big\}.
$$
The assumptions made on the coefficients of (\ref{eq:cd}) are the following:
\begin{equation}
\label{eq:ass:rho}
\rho_\a\in L^\infty_\#(\R^d;\R)\mbox{ and }\exists c_\a>0\mbox{ s.t. }
\rho_\a(y)\geq c_\a ,
\end{equation}
\begin{equation}
\label{eq:ass:vel}
b_\a\in L^\infty_\#(\R^d;\R^d) \mbox{ and } \div b_\a\in L^\infty_\#(\R^d;\R),
\end{equation}
\begin{equation}
\label{eq:ass:diff}
D_\a=(D_\a)^*\in L^\infty_\#(\R^d;\R^{d\times d})\mbox{ and }\exists
c_\a>0\mbox{ s.t. }c_\a|\xi|^2\le D_\a(y)\xi\cdot\xi
\end{equation}
for all $\xi\in\R^d$ and for almost every $y\in\R^d$ (where $(D_\a)^*$ 
is the adjoint or transposed matrix of $D_\a$),
\begin{equation}
\label{eq:ass:cple}
\Pi\in L^\infty_\#(\R^d;\R^{d\times d})\mbox{ and }\Pi_{\a\b}\le0\mbox{ for
}\a\not=\b,
\end{equation}
we also assume that the coupling matrix $\Pi$ is irreducible, i.e., there
exists no partition
$\mathcal{B}\not=\emptyset$,$\mathcal{B}'\not=\emptyset$ of $\{1,\cdots,N\}$
such that
\begin{equation}
\label{eq:ass:irr}
\{1,\cdots,N\}=\mathcal{B}\cup\mathcal{B}'\mbox{ with }
\mathcal{B}\cap\mathcal{B}'=\emptyset \mbox{ and }\Pi_{\a\b}=0\mbox{ for all
}\a\in\mathcal{B}, \b\in\mathcal{B}'.
\end{equation}
This irreducibility assumption ensures that the system (\ref{eq:cd}) cannot be
decoupled in two disjoint subsystems (see Remark \ref{rem:irr} below).

\begin{remark}
The only assumption made on the convective fields $b_\a$ in (\ref{eq:ass:vel})
is that they are bounded as well as their divergences. No divergence-free assumption is made on
these vector fields. The hypotheses (\ref{eq:ass:cple})-(\ref{eq:ass:irr}) are
borrowed from \cite{Sw:92, MS:95, AC:00, Ca:02}. A matrix satisfying
(\ref{eq:ass:cple}) is sometimes referred to as ``cooperative matrix'' (up to
the addition of a multiple of the identity it is also an $M$-matrix). Hence the
system (\ref{eq:cd}) gets the name ``cooperative parabolic system''.
\end{remark}

Finally, we assume that the initial data in (\ref{eq:in}) has following
regularity: $u^{in}_\a\in L^2(\R^d)$ for each $1 \le \alpha \le N$.

\section{Qualitative Analysis}
\label{sec:qa}
Results of existence and uniqueness of solutions to (\ref{eq:cd}) are
classical.
The ``cooperative'' hypothesis (\ref{eq:ass:cple}) is actually not necessary to
obtain well-posedness. Standard approach is to derive a priori estimates on the
solution.
Classical technique is to multiply (\ref{eq:cd}) by $u^\ve_\a$ and integrate
over $\R^d$:
$$
\frac12\frac{d}{dt}\iR\rho_\a^\ve|u^\ve_\a|^2{\rm d}x+\iR D^\ve_\a\nabla
u^\ve_\a\cdot\nabla u^\ve_\a{\rm d}x
$$
$$
=\frac{1}{2\ve}\iR\div(b^\ve_\a)|u^\ve_\a|^2{\rm d}x -
\frac1{\ve^2}\sum_{\b=1}^N\iR \Pi^\ve_{\a\b}u^\ve_\b u^\ve_\a {\rm d}x.
$$
Since the divergences of the convective fields are bounded, summing the above
expression over $1\le\a\le N$ followed by the application of Cauchy-Schwarz
inequality, Young's inequality, Gronwall's lemma and an integration over
$(0,T)$ leads to the following a priori estimates:
\begin{equation}
\label{eq:apr:ve}
\dsp\sum_{\a=1}^N\|u^\ve_\a\|_{L^\infty((0,T);L^2(\R^d))} +
\sum_{\a=1}^N\|\nabla u^\ve_\a\|_{L^2((0,T)\times\R^d)}\le
C_\ve\sum_{\a=1}^N\|u^{in}_\a\|_{L^2(\R^d)},
\end{equation}
where the constant $C_\ve$ depends on the small parameter $\ve$. For any fixed
$0<\ve$, we can use the a priori estimates (\ref{eq:apr:ve}) and Galerkin
method to establish existence and uniqueness of the solution $u^\ve_\a\in
L^2((0,T);H^1(\R^d))\cap C((0,T);L^2(\R^d))$, $1\le\a\le N$.

Maximum principles are a different story altogether. In general we have no
maximum principles for systems. However, the hypotheses
(\ref{eq:ass:cple})-(\ref{eq:ass:irr}) guarantee a maximum principle. In
\cite{Sw:92, MS:95}, weakly coupled cooperative elliptic systems with coupling
matrices satisfying (\ref{eq:ass:cple})-(\ref{eq:ass:irr}) are studied with
emphasis on maximum principles and on the well-posedness of associated spectral
problems. The results of \cite{Sw:92} on the cooperative elliptic systems have
their parabolic counterpart. We state the result from \cite{Sw:92} adapted to
cooperative parabolic systems:

\begin{lemma}[see \cite{Sw:92, MS:95} for a proof]
Let the conditions (\ref{eq:ass:rho})-(\ref{eq:ass:irr}) on the coefficients of
(\ref{eq:cd}) be satisfied. Then, for any fixed $\ve>0$, the following holds:

(i) There is a unique solution $u^\ve_\a\in L^2((0,T);H^1(\R^d))\cap
C((0,T);L^2(\R^d))$ for $1 \le \alpha \le N$.

(ii) If $u^{in}_\a\ge0$ for all $1 \le \alpha \le N$, then $u^{\ve}_\a\ge0$ for
all $1 \le \alpha \le N$.
\end{lemma}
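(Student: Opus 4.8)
For part (i), the plan is to follow the Galerkin strategy already sketched in the text. Fix $\ve>0$; since $\rho_\a$ is bounded below by (\ref{eq:ass:rho}) and $D_\a$ is coercive by (\ref{eq:ass:diff}), the system is uniformly parabolic for each fixed $\ve$. I would pick a countable Galerkin basis of the separable space $H^1(\R^d)$, project (\ref{eq:cd}) onto finite-dimensional subspaces, and solve the resulting linear system of ODEs, whose global solvability on $[0,T]$ is immediate. The a priori bound (\ref{eq:apr:ve}) holds uniformly in the discretization parameter, so by Banach--Alaoglu I would extract weak limits in $L^2((0,T);H^1(\R^d))$ and weak-$*$ limits in $L^\infty((0,T);L^2(\R^d))$; because every term in (\ref{eq:cd}) is linear, weak convergence suffices to pass to the limit, and no compactness (hence no difficulty from the unboundedness of $\R^d$) is needed. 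Uniqueness follows from linearity: the difference of two solutions has zero initial data, and the energy estimate behind (\ref{eq:apr:ve}) with Gronwall's lemma forces it to vanish. The continuity $u^\ve_\a\in C([0,T];L^2(\R^d))$ comes from the Lions--Magenes embedding, since $u^\ve_\a\in L^2((0,T);H^1(\R^d))$ with $\partial_t u^\ve_\a\in L^2((0,T);H^{-1}(\R^d))$ (read off from the equation).

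Part (ii) is where the cooperative structure (\ref{eq:ass:cple}) is essential, and the plan is an energy argument performed on the negative parts. Writing $w_\a:=\max(-u^\ve_\a,0)\ge0$, so that $-w_\a=\min(u^\ve_\a,0)$, I would test the $\a$-th equation with $-w_\a$, a legitimate test function since $w_\a\in L^2((0,T);H^1(\R^d))$, with the time-derivative pairing justified by the Lions chain rule. The accretive terms behave as expected: the time term yields $\tfrac12\tfrac{d}{dt}\iR\rho_\a w_\a^2\dd x$ because $\rho_\a\ge0$ is time-independent; the diffusion term yields $\iR D_\a\nabla w_\a\cdot\nabla w_\a\dd x\ge0$ after integration by parts, the boundary contribution at infinity vanishing for $H^1(\R^d)$ functions; and the convection term reduces to $-\tfrac1{2\ve}\iR(\div b_\a)\,w_\a^2\dd x$, controlled by $\|\div b_\a\|_\infty$.

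The crux is the reaction term $\tfrac1{\ve^2}\sum_\b\iR\Pi_{\a\b}u^\ve_\b(-w_\a)\dd x$. The diagonal contribution equals $\tfrac1{\ve^2}\iR\Pi_{\a\a}w_\a^2\dd x$, whose sign is irrelevant since it is a bounded lower-order term. For the off-diagonal terms I would split $u^\ve_\b=u^{\ve,+}_\b-w_\b$; then, using $\Pi_{\a\b}\le0$ and $-w_\a\le0$, the part carrying $u^{\ve,+}_\b\ge0$ is pointwise nonnegative and sits on the favorable side of the energy identity, so it can simply be discarded, while the part carrying $w_\b$ is bounded in absolute value by $\tfrac{C}{\ve^2}\iR w_\a w_\b\dd x\le\tfrac{C}{2\ve^2}\iR(w_\a^2+w_\b^2)\dd x$ via Young's inequality. \emph{This favorable sign of the off-diagonal cross term is precisely the payoff of cooperativity}, and is the one structural point without which the argument fails. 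Summing over $\a$, discarding the nonnegative diffusion and cross-sign terms, and using $\rho_\a\ge c_\a>0$ then gives $\tfrac{d}{dt}\sum_\a\iR\rho_\a w_\a^2\dd x\le C_\ve\sum_\a\iR\rho_\a w_\a^2\dd x$; since $u^{in}_\a\ge0$ forces $w_\a(0)=0$, Gronwall's lemma yields $w_\a\equiv0$, i.e. $u^\ve_\a\ge0$.

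The only genuine obstacle is bookkeeping the signs in the reaction term and justifying the chain-rule manipulations with the negative part in the weak-in-time setting; everything else is a routine energy estimate. I would also remark that irreducibility (\ref{eq:ass:irr}) is not needed for mere nonnegativity — it becomes relevant only for strict positivity and for the simplicity of the principal eigenvalue of the associated cell problem used later.
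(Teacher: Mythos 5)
Your proof is correct. Note first that the paper gives no proof of its own for this lemma: it cites \cite{Sw:92, MS:95}, whose results concern cooperative \emph{elliptic} systems and are obtained through strong-positivity/maximum-principle machinery (resolvent positivity, Perron--Frobenius-type arguments), said to ``carry over'' to the parabolic case. Your part (i) coincides with the Galerkin-plus-energy-estimate sketch the paper gives in Section \ref{sec:qa} just before the lemma, so there is nothing new there. Your part (ii) is a genuinely different and self-contained route: instead of invoking the elliptic positivity theory, you run a Stampacchia-type truncation argument, testing the $\a$-th equation with $\min(u^\ve_\a,0)$ and using that $\Pi_{\a\b}\le0$ for $\a\not=\b$ makes the cross term carrying $u^{\ve,+}_\b$ nonnegative, so that only terms quadratic in the negative parts survive and Gronwall closes the argument. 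What this buys is an elementary proof at the level of weak solutions, valid on all of $\R^d$, which also substantiates your closing remark that cooperativity (\ref{eq:ass:cple}) alone yields nonnegativity, irreducibility (\ref{eq:ass:irr}) being needed only for the Krein--Rutman-type conclusions of Proposition \ref{prop:spec} (simplicity of $\lambda$, strict positivity of $\v_\a,\v^*_\a$); what the cited route buys instead is \emph{strict} positivity, which an energy method cannot deliver. One technical point deserves care: because of the weight $\rho_\a$, reading $\partial_t u^\ve_\a\in L^2((0,T);H^{-1}(\R^d))$ ``off the equation'' is slightly imprecise, since multiplication by $\rho_\a^{-1}\in L^\infty$ does not map $H^{-1}$ to itself. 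The clean fix is to work in the Gelfand triple whose pivot space is $L^2(\R^d)$ endowed with the $\rho_\a$-weighted inner product (the norms being equivalent by (\ref{eq:ass:rho})); then the Lions theory applies verbatim, and both the $C([0,T];L^2(\R^d))$ continuity and the chain-rule identity $\langle\rho_\a\partial_t u^\ve_\a,\min(u^\ve_\a,0)\rangle=\tfrac12\tfrac{d}{dt}\int_{\R^d}\rho_\a\,|\!\min(u^\ve_\a,0)|^2\,{\rm d}x$ used in your truncation argument are rigorously justified.
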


\begin{remark}
In order to make an asymptotic analysis on (\ref{eq:cd}), as $\ve\to0$, one
demands uniform (with respect to $\ve$) estimates on the solution $u^\ve_\a$.
But the estimates in (\ref{eq:apr:ve}) are not uniform in $\ve$. This renders
the application of standard compactness theorems from homogenization theory
useless for (\ref{eq:cd}).
\end{remark}

\section{Factorization Principle}
\label{sec:fp}
The difficulty with the derivation of a priori estimates in presence of large
lower order terms has long been recognized \cite{Va:81, Ji:84, Ko:84, AC:00, ACPSV:04,
DP:05}. The idea is to use information from an associated spectral cell
problem. The basic principle is to factor out principal eigenfunction from the
solution to arrive at a new ``factorized system'', amenable to uniform a priori
estimates. This idea of factoring our oscillations from the solution was first
introduced in \cite{Va:81} in the context of elliptic eigenvalue problems. In
case of scalar parabolic equations it is shown in \cite{Ji:84, Ko:84, DP:05,
AR:07} that the factorized equations have no zero order terms and that the
first order terms are divergence free. In case of cooperative elliptic systems
with large lower order terms studied in \cite{AC:00, Ca:02}, however, it is
shown that the factorized systems still have zero order terms and are
transformed as ``difference terms''. We adopt the ``factorization principle'',
extensively used in the above mentioned references, to remedy the difficulty we
have with the derivation of uniform a priori estimates for (\ref{eq:cd}). We
first define the following spectral problem associated with (\ref{eq:cd}) and
posed in the unit cell with periodic boundary conditions:
\begin{equation}
\label{eq:scp}
\left\{
\begin{array}{ll}
\dsp b_\a \cdot \ny \v_\a - \div_y \Big(D_\a \ny \v_\a\Big) + \sum_{\b=1}^N
\Pi_{\a\b} \v_\b = \lambda\rho_\a \v_\a & \mbox{ in }Y, \\[0.3cm]
y \to \v_\a(y)  \hspace{1 cm} Y\mbox{-periodic.}
\end{array} \right.
\end{equation}
The above spectral cell problem is not self-adjoint. The associated adjoint
problem is:
\begin{equation}
\label{eq:ascp}
\left\{
\begin{array}{ll}
\dsp -\div_y(b_\a \v^*_\a) - \div_y \Big(D_\a \ny \v^*_\a\Big) + \sum_{\b=1}^N
\Pi^*_{\a\b} \v^*_\b = \lambda\rho_\a \v^*_\a & \mbox{ in }Y,\\[0.3cm]
y \to \v^*_\a(y) \hspace{1 cm} Y\mbox{-periodic,}
\end{array} \right.
\end{equation}
where $\Pi^*$ is the transpose of $\Pi$. The well-posedness of the above
spectral problems is a delicate issue which is addressed in \cite{Sw:92, MS:95}. 
The following proposition is an adaptation to our periodic setting of the 
main result of \cite{Sw:92, MS:95}.

\begin{proposition}[see \cite{Sw:92} for a proof]
\label{prop:spec}
Under the assumptions (\ref{eq:ass:rho})-(\ref{eq:ass:irr}) on the
coefficients, the spectral problems (\ref{eq:scp}) and (\ref{eq:ascp}) admit a
common first eigenvalue (i.e., smallest in modulus) which satisfies:

(i) the first eigenvalue $\lambda$ is real and simple,

(ii) the corresponding first eigenfunctions $(\v_\a)_{1\le\a\le N}\in
(H^1_\#(Y))^N$ for (\ref{eq:scp}), $(\v^*_\a)_{1\le\a\le N}\in (H^1_\#(Y))^N$
for (\ref{eq:ascp}) are positive, $\v_\a,\v^*_\a>0$ for $1\le\a\le N$, and
unique up to normalization.
\end{proposition}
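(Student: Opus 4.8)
The plan is to recast \eqref{eq:scp} as a fixed point problem for a compact positive operator and to invoke the Krein--Rutman theorem, exactly as in the elliptic setting of \cite{Sw:92, MS:95}; passing to the periodic framework only replaces the bounded domain with boundary conditions by the flat torus $Y$. First I would absorb the (unsigned) zeroth order terms by shifting the spectral parameter. Writing $\cL_\a\v_\a := b_\a\cdot\ny\v_\a - \div_y(D_\a\ny\v_\a)$ for the scalar convection--diffusion operators, I set, for a constant $k>0$ to be fixed large,
\begin{equation*}
(\mathcal{M}\v)_\a := \cL_\a\v_\a + \sum_{\b=1}^N\Pi_{\a\b}\v_\b + k\rho_\a\v_\a ,
\end{equation*}
so that \eqref{eq:scp} is equivalent to $\mathcal{M}\v = (\lambda+k)\rho\v$. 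Choosing $k$ larger than $\|\Pi_{\a\a}\|_{L^\infty}/c_\a$ for every $\a$ makes each diagonal scalar operator $\cL_\a + \Pi_{\a\a} + k\rho_\a$ carry a strictly positive zeroth order coefficient, hence by the scalar maximum principle it is invertible with a positivity preserving inverse. I would then define the solution operator $T := \mathcal{M}^{-1}(\rho\,\cdot)$, whose eigenvalues are exactly the numbers $(\lambda+k)^{-1}$; the smallest eigenvalue $\lambda$ of \eqref{eq:scp} corresponds to the largest eigenvalue of $T$, i.e. to its spectral radius.

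The two properties needed for Krein--Rutman are compactness and strong positivity of $T$ on the natural cone $\cK$ of $N$-tuples of nonnegative periodic functions. Compactness follows from elliptic regularity for the $L^\infty$-coefficient operators $\cL_\a$ (De Giorgi--Nash--Moser yields H\"older continuity, so $T$ maps into $(C_\#(Y))^N$, a space in which $\cK$ has nonempty interior) together with the Rellich compact embedding on the torus. Positivity of $T$ is where the cooperativity \eqref{eq:ass:cple} enters: solving $\mathcal{M}\v=\rho f$ with $f\ge0$ componentwise amounts to $(\cL_\a+\Pi_{\a\a}+k\rho_\a)\v_\a = \rho_\a f_\a + \sum_{\b\neq\a}(-\Pi_{\a\b})\v_\b$, in which every coupling coefficient $-\Pi_{\a\b}$ with $\a\neq\b$ is nonnegative, so the scalar maximum principle propagates nonnegativity and $T\cK\subset\cK$. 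I expect the main obstacle to be upgrading this to \emph{strong} positivity (irreducibility of $T$): within each component the scalar strong maximum principle forces $\v_\a>0$ as soon as $\v_\a\not\equiv0$, while the irreducibility hypothesis \eqref{eq:ass:irr} guarantees that a nonzero input in one component feeds, through the chain of nonvanishing off-diagonal couplings, a strictly positive source into every other component. Making this propagation rigorous --- essentially a graph-connectivity argument on $\{1,\dots,N\}$ coupled with the strong maximum principle --- is the delicate point treated in \cite{Sw:92, MS:95}.

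With $T$ compact and strongly positive, the strong form of the Krein--Rutman theorem applies: its spectral radius $r(T)>0$ is an algebraically simple eigenvalue, it is the only eigenvalue admitting an eigenvector in $\cK$, and the associated eigenvector lies in the interior of $\cK$, i.e. $\v_\a>0$ for every $\a$. Unshifting via $\lambda = r(T)^{-1}-k$ produces a real, simple first eigenvalue of \eqref{eq:scp} with a strictly positive eigenfunction, and elliptic regularity places $(\v_\a)_{1\le\a\le N}\in(H^1_\#(Y))^N$. Finally I would run the identical construction for the adjoint operator, whose transpose coupling $\Pi^*$ is again cooperative and irreducible, yielding a positive eigenfunction $(\v^*_\a)_{1\le\a\le N}$ for \eqref{eq:ascp}; since the solution operator of the adjoint problem is precisely the Banach-space adjoint $T^*$, which shares the spectral radius of $T$, the two problems have a common first eigenvalue $\lambda$. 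The eigenfunctions are then pinned down uniquely by a normalization such as $\sum_{\a=1}^N\iy \rho_\a\v_\a\v^*_\a\,dy = 1$.
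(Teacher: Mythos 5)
Your proposal is correct and takes essentially the same route as the paper, which offers no proof of its own for Proposition \ref{prop:spec} but defers entirely to \cite{Sw:92, MS:95}, whose argument is precisely the scheme you outline: shift the spectral parameter to obtain an inverse-positive operator, get compactness of the solution operator from elliptic regularity, use cooperativity plus irreducibility for strong positivity, apply Krein--Rutman, and treat the adjoint problem the same way to identify the common first eigenvalue. The one caveat is that your componentwise inverse-positivity step is circular as written (you invoke $v_\b\ge 0$ on the right-hand side to deduce $v_\a\ge 0$); this is repaired by the standard monotone iteration starting from zero, or by a negative-part argument, which is exactly the kind of detail settled in the cited references, as you yourself note for the strong-positivity step.
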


\begin{remark}
\label{rem:spec}
The first eigenvalue $\lambda$ in Proposition \ref{prop:spec} measures the
balance between convection-diffusion and reaction. Also, the uniqueness of
first eigenfunctions in Proposition \ref{prop:spec} is up to a chosen
normalization. The normalization that we consider is the following:
\begin{equation}
\label{eq:norm}
\sum_{\a=1}^N\:\:\int_Y \rho_\a \v_\a\v^*_\a{\rm d}y = 1.
\end{equation}
\end{remark}

In the proof of our a priori estimates it will be convenient to scale the 
spectral problems (\ref{eq:scp})-(\ref{eq:ascp}) to the entire domain $\R^d$ 
via the change of variables $y\to\ve^{-1}x$. More precisely, (\ref{eq:scp})-(\ref{eq:ascp}) 
are equivalent to

\begin{equation}
\label{eq:scp:scl}
\left\{
\begin{array}{ll}
\dsp \ve b^\ve_\a \cdot \nabla \v^\ve_\a - \ve^2\div \Big(D^\ve_\a \nabla \v^\ve_\a\Big) + \sum_{\b=1}^N
\Pi^\ve_{\a\b} \v^\ve_\b = \lambda\rho^\ve_\a \v^\ve_\a & \mbox{ in }\R^d, \\[0.3cm]
x \to \v^\ve_\a(x)\equiv \v_\a(x/\ve)  \hspace{1 cm} \ve Y\mbox{-periodic,}
\end{array} \right.
\end{equation}

\begin{equation}
\label{eq:ascp:scl}
\left\{
\begin{array}{ll}
\dsp -\ve\div(b^\ve_\a \v^{*\ve}_\a) - \ve^2\div \Big(D^\ve_\a \nabla \v^{*\ve}_\a\Big) + \sum_{\b=1}^N
\Pi^{*\ve}_{\a\b} \v^{*\ve}_\b = \lambda\rho^\ve_\a \v^{*\ve}_\a & \mbox{ in }\R^d,\\[0.3cm]
x \to \v^{*\ve}_\a(x)\equiv\v^*_\a(x/\ve) \hspace{1 cm} \ve Y\mbox{-periodic.}
\end{array} \right.
\end{equation}

Now, we get down to the task of reducing (\ref{eq:cd}) to a ``factorized
system''. As explained in \cite{AC:00, ACPSV:04, DP:05, AR:07} the first eigenvalue
$\lambda$ governs the time decay or growth of the solution $u^\ve_\a$. So, as
is done in the references cited, we perform time renormalization in the spirit
of the factorization principle. Also the first eigenfunction $\v^\ve_\a$ is
factored out of $u^\ve_\a$.  In other words we make the following change of
unknowns:
\begin{equation}
\label{eq:fcp}
\dsp v^\ve_\a(t,x)=\exp{(\lambda
t/\ve^2)}\frac{u^\ve_\a(t,x)}{\v^\ve_\a(x)}.
\end{equation}
The above change of unknowns is valid, thanks to the positivity result in
Proposition \ref{prop:spec}. Now we state a result that gives the factorized
system satisfied by the new unknown $(v^\ve_\a)_{1\leq\a\leq N}$.

\begin{lemma}
\label{lem:equiv:cd}
The system (\ref{eq:cd})-(\ref{eq:in}) is equivalent to
\begin{equation}
\label{eq:cd1}
\v^\ve_\a\v^{*\ve}_\a\rho^\ve_\a\frac{\partial v^\ve_\a}{\partial t} +
\frac{1}{\ve}\tilde b^\ve_\a \cdot \nabla v^\ve_\a - \div\left(\tilde D^\ve_\a
\nabla v^\ve_\a\right)+
\frac{1}{\ve^2}\dsp\sum_{\b=1}^N\:\:\Pi^\ve_{\a\b}\v^{*\ve}_\a\v^\ve_\b(v^\ve_\b-v^\ve_\a)
= 0
\end{equation}
in $(0,T)\times\R^d$ for each $1\le\a\le N$ complemented with the initial data:
\begin{equation}
\label{eq:in1}
\dsp v^\ve_\a(0,x) = \frac{u^{in}_\a(x)}{\v^\ve_\a(x)}\hspace{4
cm}x\in\R^d,
\end{equation}
for each $1\le\a\le N$, where the components of $(v^\ve_\a)_{1\le\a\le N}$ are
defined by (\ref{eq:fcp}). The convective velocities, $\tilde b^\ve_\a(x) =
\tilde b_\a\Big(\frac{x}{\ve}\Big)$, in (\ref{eq:cd1}) are given by
\begin{equation}
\label{eq:b1}
\tilde b_\a(y) = \v_\a\v^*_\a b_\a + \v_\a D_\a\ny\v^*_\a -\v^*_\a D_\a
\ny\v_\a\hspace{0.5 cm}\textrm{for every}\:\:1\le\a\le N
\end{equation}
and the diffusion matrices, $\tilde D^\ve_\a(x) = \tilde
D_\a\Big(\frac{x}{\ve}\Big)$, in (\ref{eq:cd1}) are given by
\begin{equation}
\label{eq:D1}
\tilde D_\a(y) = \v_\a\v^*_\a D_\a\hspace{0.5 cm}\textrm{for
every}\:\:1\le\a\le N.
\end{equation}
\end{lemma}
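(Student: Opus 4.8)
The plan is to substitute the change of unknowns directly into the original system (\ref{eq:cd}) and to verify by an exact algebraic manipulation that (\ref{eq:cd1}) results; since every operation used is a multiplication or division by a strictly positive quantity, the same computation read backwards yields the converse implication, so the two systems are genuinely equivalent. Writing the inverse of (\ref{eq:fcp}) as $u^\ve_\a=\v^\ve_\a\,\exp(-\lambda t/\ve^2)\,v^\ve_\a$ and abbreviating $E:=\exp(-\lambda t/\ve^2)$, I would first differentiate. Because $E$ is independent of $x$ and $\v^\ve_\a$ is independent of $t$, the time derivative contributes $-\tfrac{\lambda}{\ve^2}\rho^\ve_\a\v^\ve_\a E\,v^\ve_\a+\rho^\ve_\a\v^\ve_\a E\,\partial_t v^\ve_\a$, while the convective and diffusive terms expand by the Leibniz rule into pieces carrying either a bare factor $v^\ve_\a$ (the derivative landing on $\v^\ve_\a$) or a factor $\nabla v^\ve_\a$. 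After cancelling the common strictly positive factor $E$, the equation separates into these two groups.

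The decisive step is to observe that the first group is annihilated by the scaled spectral equation (\ref{eq:scp:scl}). To expose this I would recast the reaction term through the elementary splitting
\[
\sum_{\b=1}^N\Pi^\ve_{\a\b}\v^\ve_\b v^\ve_\b
=\sum_{\b=1}^N\Pi^\ve_{\a\b}\v^\ve_\b\big(v^\ve_\b-v^\ve_\a\big)
+v^\ve_\a\sum_{\b=1}^N\Pi^\ve_{\a\b}\v^\ve_\b .
\]
The last sum, gathered with the bare-$v^\ve_\a$ convective and diffusive contributions and the $-\tfrac{\lambda}{\ve^2}\rho^\ve_\a\v^\ve_\a$ term, equals $v^\ve_\a$ times the difference of the two sides of (\ref{eq:scp:scl}) divided by $\ve^2$, and hence vanishes identically by the spectral equation. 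What survives is an equation in which every term carries $\partial_t v^\ve_\a$, $\nabla v^\ve_\a$, or a difference $v^\ve_\b-v^\ve_\a$.

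To cast this remainder into the divergence form of (\ref{eq:cd1}) I would multiply it by the adjoint eigenfunction $\v^{*\ve}_\a$, which is admissible since $\v^{*\ve}_\a>0$ by Proposition \ref{prop:spec}(ii). This at once supplies the time coefficient $\v^\ve_\a\v^{*\ve}_\a\rho^\ve_\a$ and the reaction term $\tfrac{1}{\ve^2}\sum_\b\Pi^\ve_{\a\b}\v^{*\ve}_\a\v^\ve_\b(v^\ve_\b-v^\ve_\a)$, so it remains only to reorganize the first- and second-order contributions into $\tfrac{1}{\ve}\tilde b^\ve_\a\cdot\nabla v^\ve_\a-\div(\tilde D^\ve_\a\nabla v^\ve_\a)$; this is the only computation requiring real care. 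Expanding $\div(\v^\ve_\a\v^{*\ve}_\a D^\ve_\a\nabla v^\ve_\a)$ by the Leibniz rule and using the symmetry $D^\ve_\a=(D^\ve_\a)^*$ from (\ref{eq:ass:diff}) to transpose $D^\ve_\a$ onto the eigenfunction gradients, the contribution $\v^\ve_\a\nabla\v^{*\ve}_\a\cdot D^\ve_\a\nabla v^\ve_\a$ cancels while the terms in $\v^{*\ve}_\a\nabla\v^\ve_\a\cdot D^\ve_\a\nabla v^\ve_\a$ double, and one reads off precisely the drift $\tilde b_\a=\v_\a\v^*_\a b_\a+\v_\a D_\a\ny\v^*_\a-\v^*_\a D_\a\ny\v_\a$ of (\ref{eq:b1}) and the diffusion $\tilde D_\a=\v_\a\v^*_\a D_\a$ of (\ref{eq:D1}), after accounting for the factor $\ve$ that relates the cell-gradient $\ny$ in (\ref{eq:b1}) to the physical gradient of the scaled eigenfunctions. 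I expect this matching of the effective drift to be the main obstacle, since it is where both the symmetry of $D_\a$ and the exact expression (\ref{eq:b1}) are forced; it is worth noting that the defining equation (\ref{eq:ascp:scl}) for $\v^*$ is not needed here, the multiplication by $\v^{*\ve}_\a$ serving only to symmetrize the first-order operator. Finally, the initial condition (\ref{eq:in1}) follows by setting $t=0$ in (\ref{eq:fcp}), where $E=1$, and the reversibility of every step completes the equivalence.
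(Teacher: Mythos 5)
Your proposal is correct and follows exactly the route the paper indicates for its (sketched) proof: substitute $u^\ve_\a=\v^\ve_\a\exp(-\lambda t/\ve^2)v^\ve_\a$ via the stated chain rule formulae, eliminate the zero-order group with the scaled spectral problem (\ref{eq:scp:scl}), and rearrange the first- and second-order terms using the symmetry of $D_\a$ into the coefficients (\ref{eq:b1})--(\ref{eq:D1}); the cancellation/doubling of the cross terms you describe is precisely the algebra deferred by the paper to \cite{Ca:02}, \cite{Hu:13}. Your side remark that the adjoint problem (\ref{eq:ascp:scl}) is not needed here is also accurate, consistent with the paper citing only (\ref{eq:scp:scl}).
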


The proof of Lemma \ref{lem:equiv:cd} is just a matter of simple algebra, using 
(\ref{eq:scp:scl}), and we refer to \cite{Ca:02}, \cite{Hu:13} for more details, 
keeping in mind the following chain rule formulae:
$$
\left\{
\begin{array}{l}
\dsp\frac{\partial u^\ve_\a}{\partial t}(t,x) = \exp{(-\lambda t/\ve^2)}
\left(
\frac{-\lambda}{\ve^2}\v_\a\Big(\frac{x}{\ve}\Big)v^\ve_\a(t,x)+\v_\a\Big(\frac{x}{\ve}\Big)\frac{\partial
v^\ve_\a}{\partial t}(t,x)\right),\\[0.5 cm]
\dsp\nabla\Big(u^\ve_\a(t,x)\Big) = \exp{(-\lambda t/\ve^2)}
\left(
\frac{1}{\ve}v^\ve_\a(t,x)\Big(\ny\v_\a\Big)\Big(\frac{x}{\ve}\Big)+\v_\a\Big(\frac{x}{\ve}\Big)\nx
v^\ve_\a(t,x) \right) .
\end{array}\right.
$$

\begin{remark}
\label{rem:div}
The divergence of the convective fields $\tilde b_\a$ satisfy
\begin{equation}
\label{eq:divb}
\div_y\tilde b_\a =
\dsp\sum_{\b=1}^N\:\:\Pi^*_{\a\b}\v_\a\v^*_\b-\dsp\sum_{\b=1}^N\:\:\Pi_{\a\b}\v^*_\a\v_\b.
\end{equation}
It follows that
$$
\sum_{\a=1}^N\:\:\div_y\tilde b_\a=0.
$$
\end{remark}

\begin{remark}
\label{rem:diff}
The factorized system (\ref{eq:cd1}) still has large lower order terms. But, as
noticed in \cite{AC:00, Ca:02}, the terms are transformed as ``difference
terms''. This factorization is the key for getting a priori estimate on the
differences $(v^\ve_\a-v^\ve_\b)$.
\end{remark}

The following lemma gives the a priori estimates on the new unknown.
\begin{lemma}
\label{lem:apriori}
Let $(v^\ve_\a)_{1\le\a\le N}$ be a weak solution of
(\ref{eq:cd1})-(\ref{eq:in1}). There exists a constant $C$, independent of
$\ve$, such that

\begin{equation}
\label{eq:apriori}
\begin{array}{ll}
\dsp\sum_{\a=1}^N\:\:\Big\|v^\ve_\a\Big\|_{L^\infty((0,T);L^2(\R^d))}+\sum_{\a=1}^N\:\:\Big\|\nabla
v^\ve_\a\Big\|_{L^2((0,T)\times\R^d)}\\[0.4 cm]
+\dsp\frac1\ve\sum_{\a=1}^N\:\:\sum_{\b=1}^N\:\:\Big\|v^\ve_\a-v^\ve_\b\Big\|_{L^2((0,T)\times\R^d)}\leq
C\sum_{\a=1}^N\:\:\|v^{in}_\a\|_{L^2(\R^d)}.
\end{array}
\end{equation}
\end{lemma}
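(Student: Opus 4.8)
\emph{The plan} is to run the classical energy method on the factorized system \eqref{eq:cd1}, exploiting the ``difference'' structure of the zeroth--order term to absorb the singular $\ve^{-2}$ contributions. First I would multiply the $\a$-th equation of \eqref{eq:cd1} by $v^\ve_\a$, integrate over $\R^d$, and sum over $1\le\a\le N$. Because the weight $\v^\ve_\a\v^{*\ve}_\a\rho^\ve_\a$ is independent of $t$, the parabolic term produces $\tfrac12\tfrac{d}{dt}\sum_\a\iR \v^\ve_\a\v^{*\ve}_\a\rho^\ve_\a|v^\ve_\a|^2$; integrating the convective term by parts gives $-\tfrac1{2\ve}\sum_\a\iR \div(\tilde b^\ve_\a)|v^\ve_\a|^2$, which is in fact of order $\ve^{-2}$ since $\div(\tilde b^\ve_\a)(x)=\ve^{-1}(\div_y\tilde b_\a)(x/\ve)$; and the diffusive term yields the coercive quadratic form $\sum_\a\iR \tilde D^\ve_\a\nabla v^\ve_\a\cdot\nabla v^\ve_\a$.

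The heart of the argument is to show that the two $\ve^{-2}$ contributions---the convective divergence term and the reaction term---recombine into a single manifestly dissipative expression. Substituting the formula \eqref{eq:divb} for $\div_y\tilde b_\a$, relabelling the summation indices $\a\leftrightarrow\b$ in the appropriate sums, and using $\Pi^*_{\a\b}=\Pi_{\b\a}$, the pointwise bracket collapses via the elementary identity $\tfrac12(v_\a^2-v_\b^2)+(v_\b-v_\a)v_\a=-\tfrac12(v_\a-v_\b)^2$ to produce, at order $\ve^{-2}$, exactly $-\tfrac12\sum_{\a,\b}\Pi^\ve_{\a\b}\v^{*\ve}_\a\v^\ve_\b(v^\ve_\a-v^\ve_\b)^2$. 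Since $\Pi_{\a\b}\le0$ for $\a\ne\b$ by \eqref{eq:ass:cple} while the diagonal terms vanish because of the factor $(v_\a-v_\b)^2$, and since $\v^\ve_\a,\v^{*\ve}_\a>0$ by Proposition \ref{prop:spec}, this term is nonnegative. I therefore obtain a dissipation identity in which all three quantities---the time derivative of the weighted $L^2$ mass, the diffusive energy, and $\tfrac1{2\ve^2}\sum_{\a\ne\b}\iR|\Pi^\ve_{\a\b}|\v^{*\ve}_\a\v^\ve_\b(v^\ve_\a-v^\ve_\b)^2$---carry the same sign. In particular no Gronwall step is needed: integrating in $t$ bounds each of the three by the initial weighted mass $\tfrac12\sum_\a\iR\v^\ve_\a\v^{*\ve}_\a\rho^\ve_\a|v^{in}_\a|^2$, with $v^{in}_\a$ the datum of \eqref{eq:in1}.

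It then remains to convert these weighted quantities into the norms of \eqref{eq:apriori}. By the regularity theory for such cooperative spectral problems (see \cite{Sw:92, MS:95}), the principal eigenfunctions are continuous and strictly positive on the torus, hence bounded above and bounded away from zero; combined with $\rho_\a\ge c_\a$ from \eqref{eq:ass:rho}, the weights $\v^\ve_\a\v^{*\ve}_\a\rho^\ve_\a$ lie between two positive $\ve$-independent constants. This makes the weighted $L^2$ norm equivalent to $\|v^\ve_\a\|_{L^2(\R^d)}$ and controls the initial mass by $\sum_\a\|v^{in}_\a\|_{L^2}^2$. Likewise, since $\tilde D_\a=\v_\a\v^*_\a D_\a$ and $D_\a$ is coercive by \eqref{eq:ass:diff}, one gets $\sum_\a\|\nabla v^\ve_\a\|^2_{L^2((0,T)\times\R^d)}\le C\sum_\a\|v^{in}_\a\|^2_{L^2}$.

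The last, and I expect the \emph{main obstacle} after the sign bookkeeping above, is to control \emph{every} difference $v^\ve_\a-v^\ve_\b$ by $C\ve$, whereas the dissipation identity directly estimates only those pairs with $\Pi_{\a\b}\ne0$ or $\Pi_{\b\a}\ne0$. Here I would invoke the irreducibility assumption \eqref{eq:ass:irr}: defining the undirected graph on $\{1,\dots,N\}$ with an edge between $\a$ and $\b$ whenever the coupling does not vanish, irreducibility forces this graph to be connected, for otherwise a disconnecting partition would contradict \eqref{eq:ass:irr}. For an arbitrary pair I then choose a path in the graph and telescope $v^\ve_\a-v^\ve_\b=\sum_i(v^\ve_{\gamma_i}-v^\ve_{\gamma_{i+1}})$; the triangle inequality, together with the fact that $N$ is fixed so path lengths are bounded, reduces any pair to the finitely many edge-differences already estimated, yielding $\tfrac1\ve\sum_{\a,\b}\|v^\ve_\a-v^\ve_\b\|_{L^2((0,T)\times\R^d)}\le C$. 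Passing from the squared estimates to the linear right-hand side of \eqref{eq:apriori} is then routine via $\sqrt{\sum_\a a_\a^2}\le\sum_\a a_\a$, while the remaining analytic steps (integration by parts on $\R^d$ and a density/Galerkin argument to justify the formal energy computation) are standard.
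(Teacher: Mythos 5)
Your proof is correct and follows the same energy-method strategy as the paper: multiply (\ref{eq:cd1}) by $v^\ve_\a$, integrate, sum over $\a$, recombine the two $\mathcal{O}(\ve^{-2})$ contributions (convective divergence and reaction) into the single dissipative term $-\tfrac{1}{2\ve^2}\sum_{\a,\b}\int_{\R^d}\Pi^\ve_{\a\b}\v^{*\ve}_\a\v^\ve_\b|v^\ve_\a-v^\ve_\b|^2\,{\rm d}x$, observe that all three terms of the resulting identity carry the same sign, and integrate in time with no Gronwall step; your bracket identity $\tfrac12(v_\a^2-v_\b^2)+(v_\b-v_\a)v_\a=-\tfrac12(v_\a-v_\b)^2$ reproduces exactly the paper's energy estimate (\ref{eq:nrj}). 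Two differences are worth recording. First, the paper does not substitute the pointwise formula (\ref{eq:divb}); it derives the same recombination weakly, by testing the scaled spectral problems (\ref{eq:scp:scl})--(\ref{eq:ascp:scl}) against $\v^{*\ve}_\a(v^\ve_\a)^2$, which avoids differentiating $\tilde b_\a$ pointwise (the eigenfunctions are a priori only $H^1_\#$); your shortcut is legitimate provided (\ref{eq:divb}) is read distributionally, which is what Remark \ref{rem:div} asserts, so this is a presentational rather than substantive difference. Second, your irreducibility/telescoping step is a genuine addition: the paper integrates (\ref{eq:nrj}) and directly claims (\ref{eq:apriori}), which tacitly assumes the dissipation term controls \emph{every} difference $v^\ve_\a-v^\ve_\b$, whereas it only sees pairs with $\Pi_{\a\b}\not\equiv 0$; your graph-connectivity argument is exactly what is needed to pass from coupled pairs to all pairs, so on this point your write-up is more careful than the paper's. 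One caveat, shared by both arguments: on a coupled pair the dissipation carries the weight $|\Pi_{\a\b}|\v^*_\a\v_\b$, which dominates $\|v^\ve_\a-v^\ve_\b\|^2_{L^2}$ only if $|\Pi_{\a\b}|$ is bounded away from zero on $Y$ (otherwise one needs an extra cell-wise Poincar\'e argument combining the localized dissipation with the gradient bound); neither you nor the paper addresses this possible degeneracy.
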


\begin{proof}
To derive the a priori estimates, we multiply (\ref{eq:cd1}) by $v^\ve_\a$
followed by integrating over $\R^d$ and sum the obtained expressions over
$1\le\a\le N$:
\begin{equation}
\label{eq:1:nrj}
\begin{array}{cc}
\dsp\frac12\frac{d}{dt}\dsp\sum_{\a=1}^N\int_{\R^d}\v^\ve_\a\v^{*\ve}_\a\rho^\ve_\a|v^\ve_\a|^2{\rm
d}x
-\frac{1}{2\ve}\dsp\sum_{\a=1}^N\int_{\R^d}\div(\tilde
b^\ve_\a)|v^\ve_\a|^2{\rm d}x\\[0.3 cm]
+\dsp\sum_{\a=1}^N\int_{\R^d}\tilde D^\ve_\a \nabla v^\ve_\a\cdot\nabla
v^\ve_\a{\rm d}x
+\frac{1}{\ve^2}\dsp\sum_{\a=1}^N\:\:\sum_{\b=1}^N\int_{\R^d}\Pi^\ve_{\a\b}\v^{*\ve}_\a\v^\ve_\b(v^\ve_\b-v^\ve_\a)v^\ve_\a=0.
\end{array}
\end{equation}
To simplify the above expressions, we now use the scaled spectral problems 
(\ref{eq:scp:scl})-(\ref{eq:ascp:scl}). Multiply
(\ref{eq:scp:scl}) by $\v^{*\ve}_\a(v^\ve_\a)^2$ followed by integration over the space domain $\R^d$:
$$
\frac1\ve\iR(b^\ve_\a \cdot \nabla \v^\ve_\a)\v^{*\ve}_\a(v^\ve_\a)^2{\rm d}x - \iR\div
\Big(D^\ve_\a \nabla \v^\ve_\a\Big)\v^{*\ve}_\a(v^\ve_\a)^2{\rm d}x
$$
$$
+\frac{1}{\ve^2} \sum_{\b=1}^N \iR\Pi^\ve_{\a\b} \v^\ve_\b\v^{*\ve}_\a(v^\ve_\a)^2{\rm d}x -
\frac{1}{\ve^2}\iR\lambda\rho^\ve_\a \v^\ve_\a\v^{*\ve}_\a(v^\ve_\a)^2{\rm d}x
$$
$$
= - \frac1\ve\iR\div(b^\ve_\a\v^{*\ve}_\a)\v^\ve_\a(v^\ve_\a)^2{\rm d}x - \frac1\ve\iR\v^\ve_\a\v^{*\ve}_\a
b^\ve_\a\cdot\nabla(v^\ve_\a)^2{\rm d}x
$$
$$ + \iR \v^{*\ve}_\a D^\ve_\a \nabla
\v^\ve_\a\cdot\nabla(v^\ve_\a)^2{\rm d}x
+ \iR(v^\ve_\a)^2  D^\ve_\a \nabla \v^\ve_\a\cdot\nabla\v^{*\ve}_\a{\rm d}x 
$$
$$
+\dsp\frac{1}{\ve^2}\sum_{\b=1}^N\:\iR \Pi^\ve_{\a\b}\v^{*\ve}_\a\v^\ve_\b(v^\ve_\a)^2{\rm d}x - \frac{1}{\ve^2}\iR
\lambda\rho^\ve_\a\v^\ve_\a\v^{*\ve}_\a(v^\ve_\a)^2{\rm d}x
$$
$$
= - \frac1\ve\iR\div(b^\ve_\a\v^{*\ve}_\a)\v^\ve_\a(v^\ve_\a)^2{\rm d}x - \frac1\ve\iR\v^\ve_\a\v^{*\ve}_\a
b^\ve_\a\cdot\nabla(v^\ve_\a)^2{\rm d}x
$$
$$
+ \iR \v^{*\ve}_\a D^\ve_\a \nabla
\v^\ve_\a\cdot\nabla(v^\ve_\a)^2{\rm d}x
+ \iR(v^\ve_\a)^2  D^\ve_\a \nabla \v^\ve_\a\cdot\nabla\v^{*\ve}_\a{\rm d}x 
$$
$$
- \iR \v^\ve_\a D^\ve_\a \nabla \v^{*\ve}_\a\cdot\nabla(v^\ve_\a)^2{\rm d}x + \iR \v^\ve_\a D^\ve_\a \nabla
\v^{*\ve}_\a\cdot\nabla(v^\ve_\a)^2{\rm d}x
$$
$$
+
\dsp\frac{1}{\ve^2}\sum_{\b=1}^N\:\iR \Pi^\ve_{\a\b}\v^{*\ve}_\a\v^\ve_\b(v^\ve_\a)^2{\rm d}x - \frac{1}{\ve^2}\iR
\lambda\rho^\ve_\a\v^\ve_\a\v^{*\ve}_\a(v^\ve_\a)^2{\rm d}x
$$
$$
= - \frac1\ve\iR\div(b^\ve_\a\v^{*\ve}_\a)\v^\ve_\a(v^\ve_\a)^2{\rm d}x - \iR \div(D^\ve_\a \nabla
\v^{*\ve}_\a)\v^\ve_\a (v^\ve_\a)^2{\rm d}x 
$$
$$
- \frac{1}{\ve^2}\iR
\lambda\rho^\ve_\a\v^\ve_\a\v^{*\ve}_\a(v^\ve_\a)^2{\rm d}x
- \frac1\ve\iR\v^\ve_\a\v^{*\ve}_\a b^\ve_\a\cdot\nabla(v^\ve_\a)^2{\rm d}x 
$$
$$
+ \iR \v^{*\ve}_\a D^\ve_\a \nabla
\v^\ve_\a\cdot\nabla(v^\ve_\a)^2{\rm d}x - \iR \v^\ve_\a D^\ve_\a \nabla
\v^{*\ve}_\a\cdot\nabla(v^\ve_\a)^2{\rm d}x
$$
$$
+ \dsp\frac{1}{\ve^2}\sum_{\b=1}^N\:\iR \Pi^\ve_{\a\b}\v^{*\ve}_\a\v^\ve_\b(v^\ve_\a)^2{\rm d}x = 0.
$$
In the above expression, we recognize the scaled adjoint cell problem (\ref{eq:ascp:scl}).
We also recognize the scaled expression of (\ref{eq:b1}) for the convective field $\tilde
b_\a$. Taking all these into consideration, we have the following:
$$
- \frac1\ve\iR\tilde b^\ve_\a\cdot\nabla(v^\ve_\a)^2{\rm d}x + \dsp\frac{1}{\ve^2}\sum_{\b=1}^N\:\iR
\Big(\Pi^\ve_{\a\b}\v^{*\ve}_\a\v^\ve_\b - \Pi^{*\ve}_{\a\b}\v^{*\ve}_\b\v^\ve_\a\Big)(v^\ve_\a)^2{\rm d}x=0.
$$
Summing over $\a$, we have:
\begin{equation}
\label{eq:divb:nrj}
\dsp-\frac1{2\ve}\sum_{\a=1}^N\,\,\iR\div(\tilde b^\ve_\a) (v^\ve_\a)^2{\rm d}x = \dsp\frac{1}{2\ve^2}\sum_{\a=1}^N\sum_{\b=1}^N\:\iR
\Big(\Pi^\ve_{\a\b}\v^{*\ve}_\a\v^\ve_\b - \Pi^{*\ve}_{\a\b}\v^{*\ve}_\b\v^\ve_\a\Big)(v^\ve_\a)^2{\rm d}x.
\end{equation}
Now, let us employ (\ref{eq:divb:nrj}) in the estimate (\ref{eq:1:nrj}) which leads to:
$$
\dsp\frac12\frac{d}{dt}\dsp\sum_{\a=1}^N\int_{\R^d}\v^\ve_\a\v^{*\ve}_\a\rho^\ve_\a|v^\ve_\a|^2{\rm
d}x
+\dsp\sum_{\a=1}^N\int_{\R^d}\tilde D^\ve_\a \nabla v^\ve_\a\cdot\nabla
v^\ve_\a{\rm d}x
$$
$$
+\dsp\frac{1}{\ve^2}\dsp\sum_{\a=1}^N\:\:\sum_{\b=1}^N\iR\Big\{\Pi^\ve_{\a\b}\v^{*\ve}_\a\v^\ve_\b\Big(v^\ve_\b v^\ve_\a - \frac12 (v^\ve_\a)^2\Big) - \frac12\Pi^{*\ve}_{\a\b}\v^{*\ve}_\b\v^\ve_\a (v^\ve_\a)^2 \Big\}{\rm d}x=0.
$$
The above expression is nothing but the following energy estimate:
\begin{equation}
\label{eq:nrj}
\begin{array}{ll}
\dsp\frac12\frac{d}{dt}\dsp\sum_{\a=1}^N\int_{\R^d}\v^\ve_\a\v^{*\ve}_\a\rho^\ve_\a|v^\ve_\a|^2{\rm
d}x
+\dsp\sum_{\a=1}^N\int_{\R^d}\tilde D^\ve_\a \nabla v^\ve_\a\cdot\nabla
v^\ve_\a{\rm d}x\\[0.3 cm]
-\dsp\frac{1}{2\ve^2}\dsp\sum_{\a=1}^N\:\:\sum_{\b=1}^N\int_{\R^d}\Pi^\ve_{\a\b}\v^{*\ve}_\a\v^\ve_\b|v^\ve_\a-v^\ve_\b|^2{\rm
d}x=0.
\end{array}
\end{equation}
Each one of the integrands in the above estimate is positive because of the positivity assumption (\ref{eq:ass:rho}), coercivity assumption (\ref{eq:ass:diff}) and the cooperative assumption (\ref{eq:ass:cple}). Integrating the energy estimate (\ref{eq:nrj}) over $(0,T)$ yields the a priori estimates (\ref{eq:apriori}).
\end{proof}

\section{Two-scale Compactness}
\label{sec:2scl}

The homogenization procedure is to consider the weak formulation of
(\ref{eq:cd1})-(\ref{eq:in1}) with appropriately chosen test functions and
passing to the limit as $\ve\to0$. The usual approach is to obtain two-scale
limits using a priori estimates of Lemma \ref{lem:apriori} by employing some
compactness theorems. As it has been noticed in \cite{MP:05, DP:05, AR:07}, the
classical notion of two-scale convergence from \cite{Al:92b, nguetseng}
needs to be modified in order to address the homogenization of parabolic
problems in strong convection regime. We recall this modified notion of
two-scale convergence with drift, as first defined in \cite{MP:05}.

\begin{definition}
\label{def:2scl}
Let $b^*\in\R^d$ be a constant vector. A sequence of functions $u_\ve(t,x)$ in
$L^2((0,T)\times\R^d)$ is said to two-scale converge with drift $b^*$, or
equivalently in moving coordinates $\dsp(t,x)\rightarrow\Big(t,x-\frac{b^*
t}{\ve}\Big)$, to a limit $u_0(t,x,y)\in L^2((0,T)\times\R^d\times Y)$ if, for
any function $\phi(t,x,y)\in C^\infty_c((0,T)\times\R^d;C^\infty_\#(Y))$, we
have
\begin{equation}
\label{eq:hom:defn2d}
\lim_{\ve\to0}\int_0^T\int_{\R^d}
u_\ve(t,x)\phi\Big(t,x-\frac{b^*}{\ve}t,\frac{x}{\ve}\Big){\rm d}x{\rm d}t =
\int_0^T\int_{\R^d}\iy u_0(t,x,y)\phi(t,x,y){\rm d}y{\rm d}x{\rm d}t.
\end{equation}
We denote this convergence by $u_\ve \ts u_0$.
\end{definition}

Now we state a compactness theorem, again borrowed from \cite{MP:05}, which
guarantees the existence of two-scale limits with drift for certain sequences.

\begin{proposition}
\label{prop:2scl1}\cite{MP:05, Al:08}
Let $b^*$ be a constant vector in $\mathbb{R}^d$ and let the sequence $u_\ve$
be uniformly bounded in $L^2((0,T)\times\R^d)$.
Then, there exist a subsequence, still denoted by $\ve$, and a function
$u_0(t,x,y)\in L^2((0,T)\times\R^d;L^2_\#(Y))$such that
$$
u_\ve \ts u_0.
$$
\end{proposition}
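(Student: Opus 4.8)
The plan is to follow the classical argument of Nguetseng and Allaire for ordinary two-scale convergence (see \cite{nguetseng, Al:92b}), the only genuinely new ingredient being that the drift must be absorbed by passing to the moving frame. For a test function $\phi(t,x,y)\in C^\infty_c((0,T)\times\R^d;C^\infty_\#(Y))$ I would consider the family of linear functionals
$$
L_\ve(\phi):=\iti\iR u_\ve(t,x)\,\phi\Big(t,x-\frac{b^*t}{\ve},\frac{x}{\ve}\Big)\,{\rm d}x\,{\rm d}t ,
$$
and aim to show that, along a subsequence, $L_\ve(\phi)$ converges for every such $\phi$ to a limit of Riesz form $\iti\iR\iy u_0\,\phi$ with $u_0\in L^2((0,T)\times\R^d;L^2_\#(Y))$.

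The crucial step, which I expect to be the main obstacle, is a mean-value lemma adapted to the drift: for every $g(t,x,y)\in C^\infty_c((0,T)\times\R^d;C^\infty_\#(Y))$ one has
$$
\lim_{\ve\to0}\iti\iR g\Big(t,x-\frac{b^*t}{\ve},\frac{x}{\ve}\Big)\,{\rm d}x\,{\rm d}t=\iti\iR\iy g(t,x,y)\,{\rm d}y\,{\rm d}x\,{\rm d}t .
$$
To prove it I would change variables $z=x-b^*t/\ve$ at each fixed $t$, which turns the oscillatory argument into $z/\ve+b^*t/\ve^2$. The delicate feature is that the periodic variable is shifted by the large, $t$-dependent quantity $b^*t/\ve^2$. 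Expanding $g(t,z,\cdot)$ in its Fourier series on $Y$, this shift contributes only unimodular phase factors to each Fourier mode, while the Riemann--Lebesgue lemma annihilates every non-zero mode (uniformly in the phase) and leaves precisely the zeroth mode, namely the average over $Y$; since the zeroth mode is phase-independent, the shift is harmless. A dominated-convergence argument in $t$, justified by the compact support and boundedness of $g$, then yields the displayed limit.

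Granting the lemma, applying it to $g=|\phi|^2$ gives
$$
\Big\|\phi\Big(t,x-\frac{b^*t}{\ve},\frac{x}{\ve}\Big)\Big\|_{L^2((0,T)\times\R^d)}\longrightarrow\|\phi\|_{L^2((0,T)\times\R^d\times Y)} ,
$$
so by Cauchy--Schwarz together with the uniform bound on $u_\ve$ one gets $|L_\ve(\phi)|\le C\,\|\phi\|_{L^2((0,T)\times\R^d\times Y)}$ for all small $\ve$. As the test-function space is dense in the separable space $L^2((0,T)\times\R^d\times Y)$, a diagonal extraction over a countable dense family produces a subsequence along which $L_\ve(\phi)$ converges on that family, and the uniform Lipschitz bound just obtained propagates the convergence to every $\phi$. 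The limit $L_0$ is then a bounded linear functional on $L^2((0,T)\times\R^d\times Y)$, so the Riesz representation theorem provides a unique $u_0$ in that space with $L_0(\phi)=\iti\iR\iy u_0\,\phi$. Identifying $L^2(Y)$ with $L^2_\#(Y)$ places $u_0\in L^2((0,T)\times\R^d;L^2_\#(Y))$, and this is precisely the assertion $u_\ve\ts u_0$ along the extracted subsequence.
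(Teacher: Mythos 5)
The paper itself offers no proof of this proposition: it is imported verbatim from \cite{MP:05, Al:08}, so the only question is whether your argument is sound, and it is. What you wrote is the classical Nguetseng--Allaire compactness proof transported to the moving frame, which is essentially how the cited references establish the result. The one genuinely drift-specific point is the one you correctly isolate: after the substitution $z=x-b^*t/\ve$ the periodic argument becomes $z/\ve+b^*t/\ve^2$, and the large time-dependent shift multiplies each nonzero Fourier mode by a unimodular factor, so Riemann--Lebesgue annihilates those modes exactly as in the drift-free case, while the zero mode (the cell average) is shift-invariant; this is precisely why a \emph{constant} drift is harmless, and its breakdown for $x$-dependent $b^*$ is the obstruction recorded in Remark \ref{rem:2scl}. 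The remaining machinery --- the bound $\limsup_{\ve\to0}|L_\ve(\phi)|\le C\|\phi\|_{L^2((0,T)\times\R^d\times Y)}$ obtained from Cauchy--Schwarz and the mean-value lemma applied to $|\phi|^2$, diagonal extraction over a countable dense family of test functions, propagation of convergence to all test functions by that uniform bound, and Riesz representation --- is standard and correctly assembled. The only detail left tacit is that interchanging the $\ve$-limit with the sum over Fourier modes requires summability of $\sup_t\|\hat g_k(t,\cdot)\|_{L^1(\R^d)}$ over $k$, which the smoothness and compact support of $g$ supply; this is a routine point, not a gap.
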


\begin{remark}
\label{rem:2scl}
Note that the case $b^*=0$ coincides with the classical notion of
two-scale convergence from \cite{Al:92b, nguetseng}. It should also be noted that the
two-scale limits obtained according to Proposition \ref{prop:2scl1} depend on
the chosen drift velocity $b^*\in\R^d$. These issues are addressed in
\cite{Hu:13}. Unfortunately, the notion of convergence in
Definition \ref{def:2scl} does not carry over to the case when the drift
velocity $b^*$ varies in space.
\end{remark}

If the sequence $\{u_\ve\}$ has additional bounds, then the result of
Proposition \ref{prop:2scl1} can be improved. The following result addresses
this issue when the sequence has uniform $H^1$ bounds in space.
\begin{proposition}
\label{prop:2scl}\cite{MP:05, Al:08}
Let $b^*$ be a constant vector in $\mathbb{R}^d$ and let the sequence $u_\ve$
be uniformly bounded in $L^2((0,T);H^1(\mathbb{R}^d))$.
Then, there exist a subsequence, still denoted by $\ve$, and functions
$u_0(t,x) \in L^2((0,T);H^1(\mathbb{R}^d))$ and $u_1(t,x,y) \in
 L^2((0,T)\times\mathbb{R}^d;H^1_\#(Y))$ such that
$$
u_\ve \ts u_0
$$
and
$$
\nabla u_\ve \ts \nabla_x u_0 + \nabla_y u_1.
$$
\end{proposition}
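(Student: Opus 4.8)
The plan is to adapt the classical argument of Allaire and Nguetseng (\cite{Al:92b, nguetseng}) characterising the two-scale limit of $H^1$-bounded sequences, taking advantage of the fact that in Definition \ref{def:2scl} the drift $b^* t/\ve$ enters the test function only as a translation of the macroscopic variable $x$. Since all the integrations by parts below are carried out in space (never in time), this translation is differentiated only in $x$, where it acts as the identity, so the $\ve$-bookkeeping is \emph{identical} to the undrifted case.

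First I would extract the limits. As $u_\ve$ and each component of $\nabla u_\ve$ are bounded in $L^2((0,T)\times\R^d)$, Proposition \ref{prop:2scl1} yields, along a common subsequence, a limit $u_0(t,x,y) \in L^2((0,T)\times\R^d;L^2_\#(Y))$ with $u_\ve \ts u_0$ and a limit $\xi_0(t,x,y) \in L^2((0,T)\times\R^d;L^2_\#(Y))^d$ with $\nabla u_\ve \ts \xi_0$. To see that $u_0$ does not depend on $y$, I would test $\nabla u_\ve$ against $\ve\,\Psi(t, x - b^* t/\ve, x/\ve)$ for $\Psi \in C^\infty_c((0,T)\times\R^d;C^\infty_\#(Y))^d$ and integrate by parts in $x$: the spatial divergence hitting $\Psi$ produces $\ve(\div_x\Psi) + (\div_y\Psi)$ at the drifted argument. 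The left-hand side is $O(\ve)$, while the right-hand side converges, by Definition \ref{def:2scl}, to $-\int_0^T\int_{\R^d}\iy u_0\,\div_y\Psi\,\dd y\,\dd x\,\dd t$. Hence $\nabla_y u_0 = 0$ and $u_0 = u_0(t,x)$.

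Next I would promote $u_0$ to an element of $L^2((0,T);H^1(\R^d))$ and identify the $y$-average of $\xi_0$. The device is translation-invariance of Lebesgue measure: the shifted sequence $w_\ve(t,x) := u_\ve(t, x + b^* t/\ve)$ obeys $\|w_\ve\|_{L^2} = \|u_\ve\|_{L^2}$ and $\|\nabla_x w_\ve\|_{L^2} = \|\nabla u_\ve\|_{L^2}$, so it is bounded in $L^2((0,T);H^1(\R^d))$ and, up to a further subsequence, converges weakly to some $w$ in that space. Testing $u_\ve \ts u_0$ and $\nabla u_\ve \ts \xi_0$ against $y$-independent functions and changing variables $x \mapsto x + b^* t/\ve$ identifies $w = u_0$ and $\iy\xi_0\,\dd y = \nabla_x u_0$; in particular $u_0 \in L^2((0,T);H^1(\R^d))$.

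Finally I would extract the corrector. Testing $\nabla u_\ve \ts \xi_0$ against $\Psi(t, x - b^* t/\ve, x/\ve)$ with $\div_y\Psi = 0$ and integrating by parts (the $1/\ve$ term now drops out) gives, in the limit, $\int_0^T\int_{\R^d}\iy \xi_0\cdot\Psi = \int_0^T\int_{\R^d}\iy \nabla_x u_0\cdot\Psi$, so $\xi_0 - \nabla_x u_0$ is $L^2$-orthogonal to every $y$-divergence-free periodic field. By the Helmholtz decomposition of $L^2_\#(Y)^d$ this forces $\xi_0 - \nabla_x u_0 = \nabla_y u_1$ for some $u_1 \in L^2((0,T)\times\R^d;H^1_\#(Y)/\R)$, which is exactly the asserted relation $\nabla u_\ve \ts \nabla_x u_0 + \nabla_y u_1$. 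The only point needing real care, and the main (mild) obstacle, is the verification used throughout that the drift is benign, namely that translating by $b^* t/\ve$ commutes with the spatial gradient and leaves the $\ve$-powers in the corrector computation unchanged; the measurable selection of $u_1$ in $(t,x)$ is then routine from the pointwise-in-$(t,x)$ orthogonal decomposition.
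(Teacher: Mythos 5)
The paper does not actually prove Proposition \ref{prop:2scl}; it is quoted from \cite{MP:05, Al:08}, so there is no internal proof to compare against. Your argument is correct and is essentially the standard one in those references: you reduce the drifted case to the classical two-scale compactness of \cite{Al:92b, nguetseng} by exploiting the two facts that (i) the drift only translates the macroscopic argument of the test function, so every spatial integration by parts produces exactly the same powers of $\ve$ as in the undrifted theory (this gives $\nabla_y u_0=0$ and, testing against $y$-divergence-free fields, the Helmholtz-type identification $\xi_0-\nabla_x u_0=\nabla_y u_1$), and (ii) the change of variables $x\mapsto x+b^*t/\ve$ is measure preserving, so the shifted sequence $w_\ve(t,x)=u_\ve(t,x+b^*t/\ve)$ is bounded in $L^2((0,T);H^1(\R^d))$ and its weak limit identifies $u_0$ as an element of that space with $\int_Y\xi_0\,{\rm d}y=\nabla_x u_0$. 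The only loose end you flag, the joint measurability of the corrector $u_1$ in $(t,x)$, is indeed routine: one can define $u_1(t,x,\cdot)$ as the zero-mean solution of $\Delta_y u_1=\div_y\bigl(\xi_0-\nabla_x u_0\bigr)$ in $H^1_\#(Y)$, which depends linearly and continuously on the data, giving $u_1\in L^2((0,T)\times\R^d;H^1_\#(Y))$ as required.
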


Having given the notion of convergence, we shall state a result that gives the
two-scale limits corresponding to solution sequences for
(\ref{eq:cd1})-(\ref{eq:in1}).

\begin{theorem}
\label{thm:3:2scl}
Let $b^*\in\R^d$ be a constant vector. There exist $v\in L^2((0,T);H^1(\R^d))$ and
$v_{1,\a}\in L^2((0,T)\times\R^d;H^1_\#(Y))$, for each $1\le\a\le N$, such that
a subsequence of solutions $(v^\ve_\a)_{1\le\a\le N}\in L^2((0,T);H^1(\R^d))^N$
of the system (\ref{eq:cd1})-(\ref{eq:in1}), two-scale converge with drift
$b^*$, as $\ve \to 0$, in the following sense:
\begin{equation}
\label{eq:2scl}
\begin{array}{cc}
\dsp v^\ve_\a \ts v, \hspace{1 cm} \nabla v^\ve_\a \ts \nx v + \ny
v_{1,\a},\\[0.2 cm]
\dsp\frac{1}{\ve}\Big(v^\ve_\a-v^\ve_\b\Big) \ts v_{1,\a} - v_{1,\b},
\end{array}
\end{equation}
for every $1\le\a,\b\le N$.
\end{theorem}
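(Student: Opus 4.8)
The plan is to apply the two compactness results of Section \ref{sec:2scl} component by component and then to pin down the various limits using the strong control on the differences $v^\ve_\a-v^\ve_\b$ coming from Lemma \ref{lem:apriori}. First I would observe that the a priori estimate (\ref{eq:apriori}) bounds each $v^\ve_\a$ uniformly in $L^2((0,T);H^1(\R^d))$: the $L^\infty((0,T);L^2)$ bound gives the $L^2((0,T)\times\R^d)$ part since $T<\infty$, and the gradient is controlled directly. Applying Proposition \ref{prop:2scl} to each of the finitely many indices $\a$ and extracting a common subsequence, I obtain limits $v_\a\in L^2((0,T);H^1(\R^d))$ and correctors $v_{1,\a}\in L^2((0,T)\times\R^d;H^1_\#(Y))$ with $v^\ve_\a\ts v_\a$ and $\nabla v^\ve_\a\ts\nx v_\a+\ny v_{1,\a}$. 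To see that all the $v_\a$ coincide, I would use that (\ref{eq:apriori}) forces $\|v^\ve_\a-v^\ve_\b\|_{L^2((0,T)\times\R^d)}\le C\ve$, so that $v^\ve_\a-v^\ve_\b\to0$ strongly in $L^2$ and hence (immediately, by Cauchy--Schwarz against the oscillating test function) two-scale converges with drift to $0$. By linearity and uniqueness of the two-scale limit this gives $v_\a-v_\b=0$, i.e. $v_\a=v$ for a common $v$ and all $\a$.

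Next, the same estimate (\ref{eq:apriori}) shows that $\ve^{-1}(v^\ve_\a-v^\ve_\b)$ is uniformly bounded in $L^2((0,T)\times\R^d)$, so Proposition \ref{prop:2scl1}, after a further common extraction, yields two-scale limits $w_{\a\b}\in L^2((0,T)\times\R^d;L^2_\#(Y))$ with $\ve^{-1}(v^\ve_\a-v^\ve_\b)\ts w_{\a\b}$. It then remains to identify $w_{\a\b}=v_{1,\a}-v_{1,\b}$. Subtracting the two gradient relations from the previous paragraph already gives $\nabla(v^\ve_\a-v^\ve_\b)\ts\ny(v_{1,\a}-v_{1,\b})$, where the slow gradient $\nx v$ cancels precisely because $v_\a=v_\b=v$.

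The heart of the argument, and the step I expect to be the main obstacle, is to relate $w_{\a\b}$ to this gradient corrector. I would test $d^\ve:=v^\ve_\a-v^\ve_\b$ against an oscillating vector field, taking $\Psi=\Psi(t,x,y)\in C^\infty_c((0,T)\times\R^d;C^\infty_\#(Y))^d$, writing $\Psi^\ve$ for $\Psi$ evaluated at $(t,x-b^*t/\ve,x/\ve)$, and integrating by parts in $x$. The chain rule $\div_x(\Psi^\ve)=(\div_x\Psi)^\ve+\ve^{-1}(\div_y\Psi)^\ve$ (where $\div_x\Psi,\div_y\Psi$ are the slow- and fast-variable divergences of $\Psi$) gives
\begin{equation*}
\iti\iR\nabla d^\ve\cdot\Psi^\ve\,{\rm d}x\,{\rm d}t=-\iti\iR d^\ve\,(\div_x\Psi)^\ve\,{\rm d}x\,{\rm d}t-\iti\iR\frac{d^\ve}{\ve}\,(\div_y\Psi)^\ve\,{\rm d}x\,{\rm d}t.
\end{equation*}
As $\ve\to0$ the left-hand side tends to $\iti\iR\iy\ny(v_{1,\a}-v_{1,\b})\cdot\Psi$; on the right, the first term vanishes because $d^\ve\to0$ strongly in $L^2$ while $(\div_x\Psi)^\ve$ stays bounded in $L^2$, and the second converges to $-\iti\iR\iy w_{\a\b}\,\div_y\Psi$. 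The resulting identity, valid for every such $\Psi$, is exactly the weak statement that $\ny w_{\a\b}=\ny(v_{1,\a}-v_{1,\b})$; in particular $w_{\a\b}\in L^2((0,T)\times\R^d;H^1_\#(Y))$ and $w_{\a\b}-(v_{1,\a}-v_{1,\b})$ is independent of $y$, say equal to some $c_{\a\b}(t,x)\in L^2((0,T)\times\R^d)$.

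Finally I would remove the $y$-independent defect $c_{\a\b}$. Proposition \ref{prop:2scl} determines each corrector $v_{1,\a}$ only up to an additive function of $(t,x)$, which does not affect $\ny v_{1,\a}$, so there is freedom to renormalize. Linearity of the construction gives the cocycle relations $w_{\a\b}+w_{\b\gamma}=w_{\a\gamma}$, hence $c_{\a\b}+c_{\b\gamma}=c_{\a\gamma}$, so that setting $c_\a:=c_{\a1}$ yields $c_{\a\b}=c_\a-c_\b$. Replacing $v_{1,\a}$ by $v_{1,\a}+c_\a$, which leaves the relation $\nabla v^\ve_\a\ts\nx v+\ny v_{1,\a}$ unchanged, then gives $w_{\a\b}=v_{1,\a}-v_{1,\b}$ and completes the proof. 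Apart from the identification step above, everything reduces to bookkeeping with finitely many subsequence extractions.
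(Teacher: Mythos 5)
Your proof follows essentially the same route as the paper's: Proposition \ref{prop:2scl} applied componentwise to get $v_\a$ and the correctors $v_{1,\a}$, the $\mathcal{O}(\ve)$ bound on the differences to force $v_\a=v$ for all $\a$, Proposition \ref{prop:2scl1} for the scaled differences, and the identical integration-by-parts identity against an oscillating vector field to show the limit of $\ve^{-1}(v^\ve_\a-v^\ve_\b)$ agrees with $v_{1,\a}-v_{1,\b}$ up to a function of $(t,x)$ only. Your cocycle argument for removing that $y$-independent defect consistently across all pairs is a welcome sharpening of the paper's brief remark that the correctors are defined only up to additive functions of $(t,x)$, but the overall approach coincides with the paper's proof.
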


\begin{proof}
Consider the a priori bounds (\ref{eq:apriori}) on $v^\ve_\a$ obtained in Lemma
\ref{lem:apriori}. It follows from Proposition \ref{prop:2scl} that there exist
a subsequence (still indexed by $\ve$) and two-scale limits, say $v_\a\in
L^2((0,T);H^1(\mathbb{R}^d))$ and $v_{1,\a}\in L^2((0,T)\times\R^d;H^1_\#(Y))$
such that
\begin{equation}
\label{eq:2scl:1}
\begin{array}{cc}
v^\ve_\a\ts v_\a\\[0.2 cm]
\nabla v^\ve_\a \ts \nx v_\a + \ny v_{1,\a}
\end{array}
\end{equation}
for every $1\le\a\le N$. Also from the a priori estimates (\ref{eq:apriori}) we
have:
\begin{equation}
\label{eq:apri:mod}
\dsp\sum_{\a=1}^N\:\:\sum_{\b=1}^N\:\:\int_0^T\int_{\R^d}
\Big(v^\ve_\a-v^\ve_\b\Big)^2{\rm d}x{\rm d}t\leq C\ve^2.
\end{equation}
The estimate (\ref{eq:apri:mod}) implies that the two-scale limits obtained in
the first line of (\ref{eq:2scl:1}) do match i.e., $v_\a=v$ for every
$1\le\a\le N$. However, the limit of the coupled term isn't straightforward.
Since $\dsp\frac1\ve(v^\ve_\a - v^\ve_\b)$ is bounded in
$L^2((0,T)\times\R^d)$, we have the existence of a subsequence and a function
$q(t,x,y)\in L^2((0,T)\times\R^d;L^2_\#(Y))$ from Proposition \ref{prop:2scl1}
such that
\begin{equation}
\label{eq:2scl:2}
\frac1\ve(v^\ve_\a - v^\ve_\b) \ts q(t,x,y) .
\end{equation}
Taking $\Psi\in L^2((0,T)\times\R^d\times Y)^d$, let us consider
\begin{equation}
\label{eq:2scl:3}
\begin{array}{cc}
\dsp\int_0^T\int_{\R^d}\Big(\nabla v^\ve_\a - \nabla
v^\ve_\b\Big)\cdot\Psi\Big(t,x-\frac{b^*}{\ve}t,\frac{x}{\ve}\Big){\rm d}x{\rm
d}t=\\[0.3 cm]
\dsp-\int_0^T\int_{\R^d}\Big(v^\ve_\a -
v^\ve_\b\Big)\div_x\Psi\Big(t,x-\frac{b^*}{\ve}t,\frac{x}{\ve}\Big){\rm d}x{\rm
d}t\\[0.3 cm]
\dsp-\int_0^T\int_{\R^d}\frac{1}{\ve}\Big(v^\ve_\a -
v^\ve_\b\Big)\div_y\Psi\Big(t,x-\frac{b^*}{\ve}t,\frac{x}{\ve}\Big){\rm d}x{\rm
d}t.
\end{array}
\end{equation}
Let us pass to the limit in (\ref{eq:2scl:3}) as $\ve\to0$. The first term on
the right hand side vanishes as the limits of $v^\ve_\a$ and $v^\ve_\b$ match.
To pass to the limit in the second term of the right hand side, we shall use
(\ref{eq:2scl:2}). Considering the two-scale limit in the second line of
(\ref{eq:2scl:1}), upon passing to the limit as $\ve\to0$ in (\ref{eq:2scl:3})
we have:
\begin{equation}
\label{eq:2scl:4}
\begin{array}{cc}
\dsp\int_0^T\int_{\R^d}\iy\nabla_y\Big(v_{1,\a} -
v_{1,\b}\Big)\cdot\Psi(t,x,y){\rm d}y{\rm d}x{\rm d}t =\\[0.3 cm]
-\dsp\int_0^T\int_{\R^d}\iy q(t,x,y)\div_y\Psi(t,x,y){\rm d}y{\rm d}x{\rm d}t.
\end{array}
\end{equation}
>From (\ref{eq:2scl:4}) we deduce that $(v_{1,\a} - v_{1,\b})$ and $q(t,x,y)$
differ by a function of $(t,x)$, say $l(t,x)$. As $v_{1,\a}$ and $v_{1,\b}$ are
also defined up to the addition of a function solely dependent on $(t,x)$, we
can get rid of $l(t,x)$ and we recover indeed the following limit $q(t,x,y) =
v_{1,\a} - v_{1,\b}$.
\end{proof}

\section{Homogenization Result}
\label{sec:hom}
This section deals with the homogenization of the coupled system
(\ref{eq:cd1})-(\ref{eq:in1}). To begin with, we state a Fredholm alternative 
for solving the cell problem, which is a key ingredient in the homogenization result.

\begin{lemma}
\label{lem:fh}
Let $(f_\a)_{1\le\a\le N}\in(L^2_\#(Y))^N$. Consider the following cooperative
system:
\begin{equation}
\label{eq:fh}
\left\{
\begin{array}{l}
\tilde b_\a\cdot \ny\zeta_\a - \div_y \Big(\tilde D_\a \nabla_y\zeta_\a\Big)
+ \dsp\sum_{\b=1}^N\: \Pi_{\a\b} \v^*_\a\v_\b\Big(\zeta_\b-\zeta_\a\Big)= f_\a
\mbox{ in }Y,\\[0.3cm]
y \to \zeta_\a(y) \quad Y\mbox{-periodic},
\end{array} \right.
\end{equation}
for every $1\le\a\le N$, where the coefficients $(\tilde b_\a, \tilde D_\a)$
are as in (\ref{eq:b1})-(\ref{eq:D1}) and the hypotheses
(\ref{eq:ass:vel})-(\ref{eq:ass:irr}) hold. Then there exists a unique solution
$(\zeta_\a)_{1\le\a\le N}\in(H^1_\#(Y))^N/(\R\times\mathds{1})$ to (\ref{eq:fh}), where $\mathds{1}=(1,\cdots,1)\in\R^N$, if and only if the
following compatibility condition holds true:
\begin{equation}
\label{eq:fhc}
\sum_{\a=1}^N\iy f_\a{\rm d}y=0.
\end{equation}
\end{lemma}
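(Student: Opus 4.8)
The plan is to recast \eqref{eq:fh} as a Fredholm problem for a second order elliptic system on the torus and to read off the compatibility condition from the kernel of the adjoint. Introduce the Hilbert space $V:=(H^1_\#(Y))^N$ and the bilinear form
\[
a(\zeta,\psi):=\sum_{\a=1}^N\iy \tilde D_\a\ny\zeta_\a\cdot\ny\psi_\a
+\sum_{\a=1}^N\iy(\tilde b_\a\cdot\ny\zeta_\a)\psi_\a
+\sum_{\a,\b=1}^N\iy\Pi_{\a\b}\v^*_\a\v_\b(\zeta_\b-\zeta_\a)\psi_\a,
\]
so that a weak solution of \eqref{eq:fh} is a $\zeta\in V$ with $a(\zeta,\psi)=\sum_\a\iy f_\a\psi_\a$ for all $\psi\in V$. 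Since $\tilde D_\a=\v_\a\v^*_\a D_\a$ is uniformly coercive (by \eqref{eq:ass:diff} and the positivity and boundedness below of the eigenfunctions from Proposition \ref{prop:spec}), while the convective and coupling contributions are of lower order, the form $a$ will satisfy a G\aa rding inequality $a(\zeta,\zeta)+\Lambda\|\zeta\|_{(L^2_\#)^N}^2\ge c\|\zeta\|_V^2$ (established by the symmetrisation performed below). Combined with the compact embedding $H^1_\#(Y)\hookrightarrow L^2_\#(Y)$, this makes the associated operator $\cA:V\to V'$ Fredholm of index zero, so that \eqref{eq:fh} is solvable iff the datum is orthogonal to $\ker\cA^*$, the solution being then unique modulo $\ker\cA$.

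The heart of the argument is the computation of these two (one-dimensional) kernels. Testing \eqref{eq:fh} with $\zeta_\a$ itself, summing over $\a$, rewriting $\sum_\a\iy(\tilde b_\a\cdot\ny\zeta_\a)\zeta_\a=-\tfrac12\sum_\a\iy(\div_y\tilde b_\a)\zeta_\a^2$, and inserting the expression \eqref{eq:divb} for $\div_y\tilde b_\a$ (together with $\Pi^*_{\a\b}=\Pi_{\b\a}$ and a relabelling of indices) produces the symmetrised identity
\[
\sum_{\a=1}^N\iy\tilde D_\a\ny\zeta_\a\cdot\ny\zeta_\a
-\tfrac12\sum_{\a,\b=1}^N\iy\Pi_{\a\b}\v^*_\a\v_\b(\zeta_\a-\zeta_\b)^2
=\sum_{\a=1}^N\iy f_\a\zeta_\a,
\]
which is the cell analogue of the energy estimate \eqref{eq:nrj}. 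Both terms on the left are nonnegative: the first by coercivity of $\tilde D_\a$, the second because $\Pi_{\a\b}\le0$ for $\a\neq\b$ by \eqref{eq:ass:cple} while $\v^*_\a\v_\b>0$. Hence, taking $f=0$, both must vanish: the diffusive term forces each $\zeta_\a$ to be constant, and the coupling term forces $\zeta_\a=\zeta_\b$ whenever $\Pi_{\a\b}\neq0$. The irreducibility hypothesis \eqref{eq:ass:irr} then propagates this equality across all indices, so that $\ker\cA=\R\mathds{1}$ with $\mathds{1}=(1,\dots,1)$; this identifies the quotient $(H^1_\#(Y))^N/(\R\times\mathds{1})$ appearing in the statement.

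For the adjoint kernel and the compatibility condition, I would integrate \eqref{eq:fh} over $Y$ and sum over $\a$. The diffusive terms vanish by periodicity, and the remaining convective and coupling contributions $-\sum_\a\iy(\div_y\tilde b_\a)\zeta_\a+\sum_{\a,\b}\iy\Pi_{\a\b}\v^*_\a\v_\b(\zeta_\b-\zeta_\a)$ cancel identically upon inserting \eqref{eq:divb} and relabelling; this is precisely the vanishing $\sum_\a\div_y\tilde b_\a=0$ of Remark \ref{rem:div} in a weighted form. Thus $\sum_\a\iy f_\a\,\mathrm dy=0$ is necessary, which is equivalent to saying that the constant vector $\mathds{1}$ lies in $\ker\cA^*$. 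Since $\cA$ has index zero and $\dim\ker\cA=1$, we obtain $\ker\cA^*=\R\mathds{1}$, so the Fredholm alternative yields that orthogonality to $\mathds{1}$, i.e.\ the compatibility condition \eqref{eq:fhc}, is also sufficient, and the solution is unique in $(H^1_\#(Y))^N/(\R\times\mathds{1})$. The main obstacle is the non-coercivity of $a$ caused by the large convective field $\tilde b_\a$ together with the sign-indefinite coupling; this is resolved exactly by the symmetrisation above, where the identity \eqref{eq:divb} for $\div_y\tilde b_\a$ converts the troublesome first-order term into the manifestly nonnegative quadratic difference form, after which standard Fredholm theory and the irreducibility assumption finish the proof.
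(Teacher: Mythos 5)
Your proof is correct, and it reaches the same conclusion by a genuinely different functional-analytic route than the paper. The necessity part is identical: integrate the system over $Y$, sum over $\a$, and use the divergence formula (\ref{eq:divb}) for $\div_y\tilde b_\a$ so that the convective and coupling contributions cancel. For sufficiency, however, the paper never invokes Fredholm theory: it works directly in the quotient space $\mathscr{H}(Y)=(H^1_\#(Y))^N/(\R\times\mathds{1})$ equipped with the norm $\sum_\a\|\ny z_\a\|^2_{L^2(Y)}+\sum_{\a,\b}\|z_\a-z_\b\|^2_{L^2(Y)}$, shows that the form $Q$ is coercive and continuous for this norm (continuity being handled through the invariances $Q(\eta-c\mathds{1})=Q(\eta)$ and $\iy Q(\zeta)\cdot\mathds{1}\,{\rm d}y=0$), and concludes by Lax--Milgram, uniqueness modulo $\R\mathds{1}$ being automatic in the quotient. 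You instead discard the nonnegative coupling term to get only a G\aa rding inequality on all of $(H^1_\#(Y))^N$, use the compact embedding to make the operator Fredholm of index zero, identify $\ker\cA=\R\mathds{1}$ via the same symmetrised energy identity plus irreducibility, and then get $\ker\cA^*=\R\mathds{1}$ essentially for free from the index-zero property combined with the necessity computation. The trade-off is real: the paper's route produces an explicit coercivity estimate on $\mathscr{H}(Y)$, hence continuous dependence of $\zeta$ on $f$ in that norm (which is what licenses $v_{1,\a}\in L^2((0,T)\times\R^d;\mathscr{H}(Y))$ later in the proof of Theorem \ref{thm:hom}), but that coercivity claim secretly requires a chaining argument through the irreducible coupling graph to control $\|\zeta_\a-\zeta_\b\|_{L^2}$ for pairs with $\Pi_{\a\b}\equiv0$ — a point the paper dispatches as ``computations similar to Lemma \ref{lem:apriori}''. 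Your route needs irreducibility (\ref{eq:ass:irr}) only to propagate equality of constants in the kernel, which is immediate, at the cost of invoking compactness and operator-theoretic machinery rather than a self-contained variational estimate; both proofs ultimately hinge on the same structural fact, namely (\ref{eq:divb}).
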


\begin{proof}
To prove that condition (\ref{eq:fhc}) is necessary, let us integrate the left hand side of (\ref{eq:fh}) over the unit cell. Exploiting the periodic boundary conditions, we will be left with:
$$
-\iy\div_y(\tilde b_\a)\zeta_\a{\rm d}y + \dsp\sum_{\b=1}^N\iy \Pi_{\a\b} \v^*_\a\v_\b\Big(\zeta_\b-\zeta_\a\Big){\rm d}y.
$$
Substituting for the divergence term in the above expression from (\ref{eq:divb}) and summing over $\a$ indeed guarantees that the condition (\ref{eq:fhc}) on the source term is necessary.

To prove sufficiency, let us assume that (\ref{eq:fhc}) is satisfied. Consider the following norm on the quotient space $\mathscr{H}(Y) :=(H^1_\#(Y))^N/(\R\times\mathds{1})$:
\begin{equation}
\label{eq:quo:norm}
\|(z_\a)_{1\le\a\le N}\|^2_{\mathscr{H}(Y)}=\sum_{\a=1}^N\|\ny z_\a\|^2_{L^2(Y)} + \sum_{\a=1}^N\sum_{\b=1}^N\|z_\a-z_\b\|^2_{L^2(Y)}.
\end{equation}
(It is easy to show that (\ref{eq:quo:norm}) is a norm on $\mathscr{H}(Y)$ since 
the zero set of (\ref{eq:quo:norm}) is the subspace spanned by $\mathds{1}$.)  
The variational formulation of (\ref{eq:fh}) in $\mathscr{H}(Y)$ is: find 
$\zeta=(\zeta_a)_{1\le\a\le N} \in \mathscr{H}(Y)$ such that
\begin{equation}
\label{eq:fh:vf}
\iy Q(\zeta)\cdot\eta\:{\rm d}y = L(\eta) \mbox{ for any }\eta=(\eta_a)_{1\le\a\le N} \in \mathscr{H}(Y),
\end{equation}
with
$$
\begin{array}{cc}
\dsp \iy Q(\zeta)\cdot\eta{\rm d}y := \sum_{\a=1}^N\iy\Big(\tilde b_\a(y)\cdot \ny\zeta_\a\Big)\eta_\a{\rm d}y +
\sum_{\a=1}^N\iy\tilde D_\a(y)\ny\zeta_\a\cdot\nabla_y\eta_\a{\rm d}y\\[0.2 cm]
\dsp+\sum_{\a=1}^N\sum_{\b=1}^N\iy\Pi_{\a\b}
\v^*_\a\v_\b\Big(\zeta_\b-\zeta_\a\Big)\eta_\a{\rm d}y
\end{array}
$$
and
$$
L(\eta):=\sum_{\a=1}^N\iy
f_\a\eta_\a{\rm d}y.
$$
The compatibility condition (\ref{eq:fhc}) implies that $(f_\a)_{1\le\a\le N}$ is 
orthogonal to $\mathds{1}$ in $L^2$ and consequently that the linear form $L(\eta)$ 
in (\ref{eq:fh:vf}) is continuous.

By performing similar computations as in the proof of Lemma \ref{lem:apriori},
we can show that the bilinear form in (\ref{eq:fh:vf}) is coercive in $\mathscr{H}(Y)$ i.e.,
$$
\iy Q(\zeta)\cdot\zeta\:{\rm d}y \ge C\sum_{\a=1}^N\iy |\nabla_y \zeta_\a|^2{\rm d}y
+ \sum_{\a=1}^N\sum_{\b=1}^N\iy |\zeta_a - \zeta_b|^2{\rm d}y.
$$
To show that the bilinear form in (\ref{eq:fh:vf}) is continuous on $\mathscr{H}(Y)\times\mathscr{H}(Y)$ 
we remark that, first, $\iy Q(\eta)\cdot\mathds{1}\,{\rm d}y=0$ for any $\eta\in \mathscr{H}(Y)$ (this is 
precisely the computation which yields the compatibility condition (\ref{eq:fhc})) and, second, 
$Q(\eta - c \mathds{1}) = 0$ for any $\eta\in \mathscr{H}(Y)$ and any $c\in\R$. Therefore, for any 
$\zeta,\eta \in \mathscr{H}(Y)$, we have the following:
$$
\iy Q(\zeta)\cdot\eta\:{\rm d}y = \iy Q\Big(\zeta - \mathds{1}c_\zeta\Big) \cdot 
\Big(\eta - \mathds{1}c_\eta\Big)\:{\rm d}y\hspace{0.5 cm}\mbox{ for any constants } c_\zeta, c_\eta \in\R,
$$
which implies
$$
\Big|\iy Q(\zeta)\cdot\eta\:{\rm d}y\Big|\le C\Big\|\Big(\zeta - \mathds{1}c_\zeta\Big)\Big\|_{(H^1_\#(Y))^N}\Big\|\Big(\eta - \mathds{1}c_\eta\Big)\Big\|_{(H^1_\#(Y))^N}=C\|\zeta\|_{\mathscr{H}(Y)}\|\eta\|_{\mathscr{H}(Y)}.
$$
We can thus apply the Lax-Milgram lemma in $\mathscr{H}(Y)$ to obtain the existence and uniqueness of a solution to (\ref{eq:fh}).
\end{proof}

\begin{remark}
\label{rem:quot}
The well-posedness result of the Lemma \ref{lem:fh} is given in the quotient space $(H^1_\#(Y))^N/(\R\times\mathds{1})$ i.e., the solutions are unique up to the addition of a constant. The constant being the same for each component of the solution.
\end{remark}

In the previous section, using the a priori estimates, we have obtained
two-scale limits with drift for the solution sequence. Now, by choosing an 
appropriate drift constant $b^*$, we shall characterize the two scale limits.
Contrary to the compactness result of Theorem \ref{thm:3:2scl} which gives the
convergence up to a subsequence, the next result guarantees that the entire
sequence $v^\ve_\a$ converges to $v$ for every $1\le\a\le N$. 
The main result of this article is the following.

\begin{theorem}
\label{thm:hom}
Let $(v^\ve_\a)_{1\le\a\le N}$ be the sequence of solutions to the system
(\ref{eq:cd1})-(\ref{eq:in1}). The entire sequence $v^\ve_\a$ converges, 
in the sense of Theorem \ref{thm:3:2scl}, to the limits $v\in L^2((0,T);H^1(\R^d))$ 
and $v_{1,\a}\in L^2((0,T)\times\R^d;H^1_\#(Y))$ for every $1\le\a\le N$ 
(see (\ref{eq:2scl}) for details).
The two-scale limits $v_{1,\a}$ are explicitly given by
\begin{equation}
\label{eq:2scl:corr}
v_{1,\a}(t,x,y) = \sum_{i=1}^d \frac{\partial v}{\partial
x_i}(t,x)\omega_{i,\a}(y)\hspace{0.5 cm}\mbox{ for every }1\le\a\le N,
\end{equation}
where $(\omega_{i,\a})_{1\le\a\le N}\in(H^1_\#(Y))^N/(\R\times\mathds{1})$ satisfy the cell
problem:
\begin{equation}
\label{eq:cpb}
\left\{
\begin{array}{lll}
\tilde b_\a(y)\cdot \Big(\ny\omega_{i,\a} + e_i\Big) - \div_y \Big(\tilde D_\a
\Big(\nabla_y\omega_{i,\a}+e_i\Big)\Big)\\[0.3cm]
+ \dsp\sum_{\b=1}^N\: \Pi_{\a\b}
\v^*_\a\v_\b\Big(\omega_{i,\b}-\omega_{i,\a}\Big)= \v_\a\v^*_\a\rho_\a b^*\cdot
e_i & \mbox{ in }Y,\\[0.3cm]
y \to \omega_{i,\a}(y) & Y\mbox{-periodic},
\end{array} \right.
\end{equation}
for every $1\le i\le d$, where the drift velocity $b^*$ is given by
\begin{equation}
\label{eq:drift}
\dsp b^* = \dsp\sum_{\a=1}^N\: \iy \tilde b_\a(y){\rm d}y.
\end{equation}
Further, the two-scale limit $v(t,x)$ is the unique solution of the scalar
diffusion equation:
\begin{equation}
\label{eq:hom}
\left\{
\begin{array}{ll}
\dsp\frac{\partial v}{\partial t} - \div({\mathcal D}\nabla v) = 0 &
\mbox{ in }(0,T)\times\R^d,\\[0.3 cm]
v(0,x) = \dsp\sum_{\a=1}^N u^{in}_\a(x) \dsp\iy\rho_\a(y) \v^*_\a(y){\rm d}y & \mbox{ in
}\R^d,
\end{array} \right.
\end{equation}
with the elements of the dispersion matrix $\mathcal D$ given by
\begin{equation}
\label{eq:disp}
\begin{array}{ll}
{\mathcal D}_{ij}=\dsp\sum_{\a=1}^N\:\:\iy\tilde D_\a\Big(\nabla_y\omega_{i,\a}
+ e_i\Big)\cdot\Big(\nabla_y\omega_{j,\a} + e_j\Big){\rm d}y\\[0.5 cm]
-\dsp\frac12\sum_{\a,\b=1}^N\:\:\iy\v^*_\a\v_\b\Pi_{\a\b}\Big(\omega_{i,\a} -
\omega_{i,\b}\Big)\Big(\omega_{j,\a} - \omega_{j,\b}\Big){\rm d}y.
\end{array}
\end{equation}
\end{theorem}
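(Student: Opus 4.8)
The plan is to pass to the limit in the weak formulation of \eqref{eq:cd1}, summed over $1\le\a\le N$, using oscillating test functions adapted to the drift. Concretely, I would test the $\a$-th equation against
$$
\phi^\ve_\a(t,x)=\psi\Big(t,x-\tfrac{b^*t}{\ve}\Big)+\ve\,\psi_{1,\a}\Big(t,x-\tfrac{b^*t}{\ve},\tfrac{x}{\ve}\Big),
$$
with $\psi\in C^\infty_c((0,T)\times\R^d)$ taken $\a$-\emph{independent} at leading order (this is essential for the difference structure of the coupling) and $\psi_{1,\a}\in C^\infty_c((0,T)\times\R^d;C^\infty_\#(Y))$. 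Since no bound on $\partial_t v^\ve_\a$ is available, I would integrate the parabolic term by parts in $t$, so that the drift produces the factor $-\ve^{-1}b^*\cdot\nx\psi$ which is meant to balance the large convection $\ve^{-1}\tilde b^\ve_\a\cdot\nabla v^\ve_\a$. I would then organize the identity by powers of $\ve$ and pass to the limit using the three two-scale convergences of Theorem \ref{thm:3:2scl}, including $\ve^{-1}(v^\ve_\a-v^\ve_\b)\ts v_{1,\a}-v_{1,\b}$.

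The step I expect to be the main obstacle is the treatment of the singular $\cO(\ve^{-2})$ and $\cO(\ve^{-1})$ contributions. The $\cO(\ve^{-2})$ reaction term tested against the $\a$-independent $\psi$ can be symmetrized, using $\Pi^{*}_{\a\b}=\Pi_{\b\a}$, into $\ve^{-2}\int_{\R^d}\psi^\ve\sum_\a(\div_y\tilde b_\a)(\tfrac x\ve)\,v^\ve_\a$ by virtue of the identity \eqref{eq:divb}; after one integration by parts in the convection term, this is exactly cancelled by the corresponding divergence part $-\ve^{-1}\int_{\R^d} v^\ve_\a\,\div(\tilde b^\ve_\a)\psi^\ve$. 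What survives at order $\ve^{-1}$ is $\ve^{-1}\int_{\R^d}(\nx\psi)^\ve\cdot\big[b^*\sum_\a\rho^\ve_\a\v^\ve_\a\v^{*\ve}_\a v^\ve_\a-\sum_\a v^\ve_\a\tilde b^\ve_\a\big]$, whose oscillating coefficient $G(y):=b^*\sum_\a\rho_\a\v_\a\v^*_\a-\sum_\a\tilde b_\a$ has zero $Y$-average precisely because of the normalization \eqref{eq:norm} and the definition \eqref{eq:drift} of $b^*$. Showing that these singular contributions stay bounded and that their net effect is compatible with a purely diffusive macroscopic limit—equivalently, that $b^*$ \emph{must} be the average drift \eqref{eq:drift} for the limit to exist and to annihilate the effective convection—is the crux of the proof.

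Once the singular terms are under control, I would extract the two limit problems by specialization. Taking $\psi\equiv0$ isolates the microscopic cell equation: the surviving $\cO(1)$ terms yield the weak form of
$$
\tilde b_\a\cdot\ny v_{1,\a}-\div_y(\tilde D_\a\ny v_{1,\a})+\sum_{\b=1}^N\Pi_{\a\b}\v^*_\a\v_\b(v_{1,\b}-v_{1,\a})=\v_\a\v^*_\a\rho_\a\,b^*\cdot\nx v-\tilde b_\a\cdot\nx v+\div_y(\tilde D_\a\,\nx v),
$$
posed in $Y$. This is linear in $\nx v$, and its right-hand side meets the compatibility condition \eqref{eq:fhc} exactly because of the choice \eqref{eq:drift} (using $\sum_\a\int_Y\tilde b_\a=b^*$ and $\sum_\a\int_Y\rho_\a\v_\a\v^*_\a=1$). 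Hence Lemma \ref{lem:fh} applies, and by superposition over the frozen components of $\nx v$ I obtain the corrector representation \eqref{eq:2scl:corr} with $(\omega_{i,\a})$ solving the cell problem \eqref{eq:cpb}.

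Finally, taking $\psi_{1,\a}\equiv0$ isolates the macroscopic equation tested against $\psi$. Substituting the corrector ansatz \eqref{eq:2scl:corr} into the limiting diffusion and reaction-difference terms, and symmetrizing by testing the cell problem \eqref{eq:cpb} for $\omega_i$ against $\omega_j$, collapses the $y$-integrals into the constant tensor \eqref{eq:disp}; the cooperativity \eqref{eq:ass:cple} together with $\v_\a,\v^*_\a>0$ makes the contribution $-\tfrac12\sum_{\a,\b}\int_Y\v^*_\a\v_\b\Pi_{\a\b}(\omega_{i,\a}-\omega_{i,\b})(\omega_{j,\a}-\omega_{j,\b})$ nonnegative, so $\mathcal D$ is coercive and \eqref{eq:hom} is a genuine diffusion equation. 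The initial condition is recovered by repeating the computation with a test function not vanishing at $t=0$ and tracking the boundary term from the time integration by parts, in which $v^\ve_\a(0)=u^{in}_\a/\v^\ve_\a$ is weighted by $\rho^\ve_\a\v^{*\ve}_\a$, producing $v(0,x)=\sum_\a u^{in}_\a\int_Y\rho_\a\v^*_\a\,\dd y$. Since \eqref{eq:hom} has a unique solution, the limit is independent of the extracted subsequence, and therefore the entire sequence converges.
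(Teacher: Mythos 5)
Your proposal follows the same route as the paper: the corrector test functions $\psi^\ve+\ve\,\psi^\ve_{1,\a}$ in moving coordinates, the exact cancellation of the $\cO(\ve^{-2})$ reaction terms against the divergence of $\tilde b_\a$ via \eqref{eq:divb}, the extraction of the cell problem \eqref{eq:cpb} and of $b^*$ through the Fredholm compatibility condition of Lemma \ref{lem:fh}, the symmetrization of the effective tensor by testing \eqref{eq:cpb} for $\omega_i$ against $\omega_j$, and whole-sequence convergence by uniqueness for \eqref{eq:hom}. However, there is a genuine gap at exactly the step you yourself call ``the crux'': you never control the surviving $\cO(\ve^{-1})$ convection term, you only record that $G(y):=\sum_\a\big(\rho_\a\v_\a\v^*_\a b^*-\tilde b_\a\big)$ has zero mean. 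That observation is not, by itself, an argument, for two reasons. First, the term $\ve^{-1}\iR(\nx\psi)^\ve\cdot\big[b^*\sum_\a\rho^\ve_\a\v^\ve_\a\v^{*\ve}_\a v^\ve_\a-\sum_\a v^\ve_\a\tilde b^\ve_\a\big]{\rm d}x$ is \emph{not} of the form ``zero-mean oscillating coefficient times one bounded sequence'': each summand carries its own $v^\ve_\a$, so the zero average of $G$ cannot be invoked directly. The paper first uses, for each fixed $\a$, the exact algebraic identity
$$
\sum_{\b=1}^N\Big(\rho^\ve_\b\v^\ve_\b\v^{*\ve}_\b\, b^*\, v^\ve_\b-\tilde b^\ve_\b v^\ve_\b\Big)
= v^\ve_\a\, G\Big(\frac{x}{\ve}\Big)
+\sum_{\b=1}^N\rho^\ve_\b\v^\ve_\b\v^{*\ve}_\b\, b^*\big(v^\ve_\b-v^\ve_\a\big)
+\sum_{\b=1}^N\tilde b^\ve_\b\big(v^\ve_\a-v^\ve_\b\big),
$$
so that all the $\a$-dependence sits on differences, which after division by $\ve$ are bounded by \eqref{eq:apriori}. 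Second, even the remaining piece $\ve^{-1}\iR v^\ve_\a\, G(x/\ve)\,\cdot(\nx\psi)^\ve{\rm d}x$ requires a construction: the paper introduces the auxiliary cell problem \eqref{eq:aux}, $-\Delta\Xi=G$ in $Y$ (solvable precisely because \eqref{eq:norm} and \eqref{eq:drift} give $\int_Y G\,{\rm d}y=0$), writes $\ve^{-1}G(x/\ve)=-\ve\,\Delta\Xi^\ve$, and integrates by parts so that only the bounded oscillating field $(\ny\Xi)(x/\ve)$ appears, paired with $\nabla v^\ve_\a$ and derivatives of $\psi$; this step uses the uniform $H^1$ bound, cf. \eqref{eq:vf:rerng}--\eqref{eq:vf:sing}. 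Both ingredients are absent from your sketch.

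A related inaccuracy is your description of what these singular terms do in the limit: they do not merely need to ``stay bounded and be compatible with a purely diffusive limit''; they converge to nonzero contributions (the terms involving $\ny\Xi_i\cdot\ny v_{1,\a}$, $\v_\b\v^*_\b\rho_\b(v_{1,\b}-v_{1,\a})b^*$ and $(v_{1,\a}-v_{1,\b})\tilde b_\b$ in \eqref{eq:vf:4}), and it is only after inserting \eqref{eq:2scl:corr}, reusing \eqref{eq:aux}, and symmetrizing with the cell problem that these collapse into \eqref{eq:disp}. In particular, the $\Pi$-dependent part of $\mathcal D$ cannot come from ``the limiting reaction-difference terms'' tested against $\psi$, since those were already cancelled at order $\ve^{-2}$; it reappears exclusively through the limits of the $\cO(\ve^{-1})$ convective remainders and the divergence formula \eqref{eq:divb}. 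As written, your macroscopic bookkeeping would therefore produce an incomplete dispersion tensor (missing exactly the Taylor-dispersion enhancement), which is why the auxiliary corrector $\Xi$ and the difference decomposition are not optional technicalities but the heart of the proof.
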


\begin{remark}
\label{rem:irr}
The irreducibility assumption (\ref{eq:ass:irr}) on the coupling matrix $\Pi$
ensures microscopic equilibrium among all $v^\ve_\a$ resulting in a single
homogenized limit $v(t,x)$ i.e., if the coupling matrix $\Pi\equiv0$ (say), we
get $N$ different homogenized limits.
\end{remark}

\begin{remark}
Our main homogenization result (Theorem \ref{thm:hom}) holds only for weakly
coupled cooperative parabolic systems. Our approach does not answer the
homogenization of general weakly coupled parabolic systems, not to mention
fully coupled systems. We heavily rely upon the cooperative assumption on the
coupling matrix as the positivity and spectral theorems are known only in the
cooperative case.
\end{remark}

\begin{remark}
\label{rem:lim}
The homogenized limit $v(t,x)$ is proven to satisfy a scalar diffusion equation
(\ref{eq:hom}), which is a bit deceptive by its simplicity. However, if we make 
the following change of functions:
$$
\tilde v(t,x) = \exp{(-\lambda t/\ve^2)}v\Big(t,x-\frac{b^*}{\ve}t\Big),
$$
we remark that $\tilde v(t,x)$ indeed satisfies the following scalar
convection-diffusion-reaction equation:
$$
\dsp\frac{\partial \tilde v}{\partial t} +
\frac{b^*}{\ve}\cdot\nabla \tilde v - \div({\mathcal D}\nabla \tilde v) +
\frac{\lambda}{\ve^2}\tilde v = 0\mbox{ in }(0,T)\times\R^d.
$$
Therefore, $b^*/\ve$ is precisely the effective drift while $\lambda/\ve^2$ 
is the effective reaction rate. 
Remark that because of the large drift $\ve^{-1}b^*$, we cannot work in
bounded domains.
\end{remark}

\begin{remark}
The assumption of pure periodicity on the coefficients of (\ref{eq:cd}) is 
crucial for the results obtained in this article. The natural thought for
generalizing the results of this article is to explore the possibility of
considering ``locally periodic'' coefficients i.e., coefficients of the type
$b(x,x/\ve)$, where the function is $Y$-periodic in the second variable. If
the convective fields $b^\ve_\a$ were locally periodic, then it is clear 
that the drift vector $b^*(x)$ should depend on $x$. However, in such a 
case, we have no idea on how to extend the method of two-scale asymptotic 
expansion, not to mention the even greater difficulties in generalizing 
the notion of two-scale convergence with non-constant drift (as already 
mentioned in Remark \ref{rem:2scl}). Such a generalization still
remains as an outstanding open problem in the theory of Taylor dispersion. 
\end{remark}

\begin{remark}
This article only addresses the homogenization of linear systems. We have also
considered only diagonal diffusion models. Cross diffusion phenomena occurs
naturally in the physics of multicomponent gaseous mixtures, population
dynamics and porous media (cf. \cite{BGS:12} and references therein). The
natural nonlinear transport model to consider is the Maxwell-Stefan's
equations. A complete mathematical study of the Maxwell-Stefan laws is still
missing. There have been some recent studies in this direction (cf.
\cite{BGS:12, DLM:14, MT:14, DT:p} for example). One approach would be to
consider the ``parabolically'' scaled Maxwell-Stefan's equations and arrive at
an homogenization result. The obvious questions to ask is the following: Is
there a scalar diffusion limit even in case of nonlinear Maxwell-Stefan's
equations? This problem might involve mathematical techniques quite different
from the ones used here as the spectral problems (which is the crux of the
Factorization method) in the nonlinear counterpart have not been well
understood. We hope to return to this question in subsequent publications.
\end{remark}

Before we present the proof of Theorem \ref{thm:hom}, we state a lemma that
gives some qualitative information on the dispersion matrix.

\begin{lemma}
\label{lem:disp}
The dispersion matrix $\mathcal D$ given by (\ref{eq:disp}) is symmetric
positive definite.
\end{lemma}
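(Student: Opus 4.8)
The plan is to treat the two claims separately and to reduce positive-definiteness to a single algebraic identity. Symmetry I would dispatch first, directly from the expression \eqref{eq:disp}: since $\tilde D_\a = \v_\a\v^*_\a D_\a$ inherits the symmetry of $D_\a$ from \eqref{eq:ass:diff}, the bilinear term $\tilde D_\a(\ny\omega_{i,\a}+e_i)\cdot(\ny\omega_{j,\a}+e_j)$ is invariant under $i\leftrightarrow j$, and the reaction term is manifestly symmetric because the product $(\omega_{i,\a}-\omega_{i,\b})(\omega_{j,\a}-\omega_{j,\b})$ is. So no use of the cell problem is needed for symmetry; it is built into the way \eqref{eq:disp} is written.

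For positive-definiteness, the key step is to contract $\mathcal D$ with an arbitrary $\xi=(\xi_1,\dots,\xi_d)\in\R^d$ and collapse the family of correctors into a single one. Setting $W_\a:=\sum_{i=1}^d \xi_i\,\omega_{i,\a}$ (so that $\ny W_\a=\sum_i\xi_i\ny\omega_{i,\a}$) and using bilinearity, \eqref{eq:disp} yields the identity
$$
\mathcal D\,\xi\cdot\xi = \sum_{\a=1}^N \iy \tilde D_\a\big(\ny W_\a+\xi\big)\cdot\big(\ny W_\a+\xi\big)\,{\rm d}y \;-\; \frac12\sum_{\a,\b=1}^N \iy \v^*_\a\v_\b\,\Pi_{\a\b}\,\big(W_\a-W_\b\big)^2\,{\rm d}y .
$$
Both terms are nonnegative. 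The first is controlled from below by the coercivity of $D_\a$ in \eqref{eq:ass:diff} together with the strict positivity $\v_\a,\v^*_\a>0$ from Proposition \ref{prop:spec}. In the second sum only the off-diagonal indices $\a\neq\b$ survive (the factor $(W_\a-W_\b)^2$ kills $\a=\b$), and there the cooperative hypothesis \eqref{eq:ass:cple} gives $\Pi_{\a\b}\le0$ while $\v^*_\a\v_\b>0$, so each summand is $\ge0$. This already proves $\mathcal D\ge0$.

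To upgrade to strict positivity I would argue by contradiction: if $\mathcal D\,\xi\cdot\xi=0$ then each nonnegative piece vanishes, and coercivity together with $\v_\a\v^*_\a>0$ a.e. forces $\ny W_\a+\xi=0$ a.e. in $Y$ for every $\a$. Thus each $Y$-periodic $W_\a$ has constant gradient $-\xi$; since a $Y$-periodic gradient has zero mean over the cell, this is possible only if $\xi=0$. Hence $\mathcal D\,\xi\cdot\xi>0$ whenever $\xi\neq0$. I do not expect a genuine obstacle here, as the computation is the same energy manipulation already performed in Lemma \ref{lem:apriori} and Lemma \ref{lem:fh}; the only points demanding care are keeping the correct sign in the reaction term (where the cooperative assumption \eqref{eq:ass:cple} is precisely what yields nonnegativity) and invoking the positivity of the eigenfunctions to pass from vanishing of the diffusion integral to $\ny W_\a+\xi=0$.
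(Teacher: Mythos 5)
Your proof is correct and follows essentially the same route as the paper: symmetry read off directly from (\ref{eq:disp}), then contraction with $\xi$ via the combined corrector $W_\a=\sum_i\xi_i\omega_{i,\a}$ (the paper's $\omega_{\a\xi}$), nonnegativity of both terms from coercivity, eigenfunction positivity and the cooperative sign condition, and strict positivity because a $Y$-periodic function cannot have the constant nonzero gradient $-\xi$. Your zero-mean-of-a-periodic-gradient phrasing of the last step is just a cleaner wording of the paper's ``periodic correctors cannot be affine'' argument.
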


\begin{proof}
The symmetric part is obvious. By the hypothesis on the coupling matrix $\Pi$
and the positivity of the first eigenvector functions, the factor
$\Pi_{\a\b}\v^*_\a\v_\b$ is always non-positive for $\alpha\neq\beta$. By the hypothesis
(\ref{eq:ass:diff}), we know that the diffusion matrices $D_\a$ are coercive 
with coercivity constants $c_\a>0$.
For $\xi\in\R^d$, we define
$$
\omega_{\a\xi} := \sum_{i=1}^d\omega_{i,\a}\xi_i.
$$
Then,
$$
{\mathcal D}\xi\cdot\xi
\geq\dsp\sum_{\a=1}^N\:\:c_\a\dsp\iy\Big|\nabla_y\omega_{\a\xi}+\xi\Big|^2{\rm
d}y \geq0.
$$
Now, we need to show that ${\mathcal D}\xi\cdot\xi>0$ for all $\xi\not=0$.
Suppose that ${\mathcal D}\xi\cdot\xi=0$ which in turn implies that
$\omega_{\a\xi}+\xi\cdot y\equiv C_\a$ for some constant $C_\a$. As the cell solutions
$(\omega_{i,\a})_{1\le\a\le N}$ are $Y$-periodic, they cannot be affine. Thus the
above equalities are possible only when $\xi=0$ which implies the positive
definiteness of $\mathcal D$.
\end{proof}

\begin{proof}[Proof of Theorem \ref{thm:hom}]
In the sequel we use the notations
$$
\begin{array}{cc}
\dsp\phi\equiv\phi(t,x) \, , \quad \phi^\ve\equiv\phi\left(t,x-\frac{b^*
t}{\ve}\right) \, ,\\[0.3 cm]
\dsp\phi_{1,\a}\equiv\phi_{1,\a}(t,x,y) \, , \quad \phi^\ve_{1,\a}\equiv\phi_{1,\a}\left(t,x-\frac{b^*
t}{\ve},\frac{x}{\ve}\right).
\end{array}
$$
The idea is to test the factorized equation (\ref{eq:cd1}) with
$$
\phi^\ve_\a=\phi^\ve+\ve\,\phi^\ve_{1,\a},
$$
where $\phi(t,x)$ and $\phi_{1,\a}(t,x,y)$ are smooth functions with compact support in $x$, which
vanish at the final time $T$ and are $Y$-periodic with respect to $y$. We get
$$
\begin{array}{cc}
 \dsp\sum_{\a=1}^N\:\:\iti\iR\v^\ve_\a\v^{*\ve}_\a\rho^\ve_\a\frac{\partial
v^\ve_\a}{\partial t}\phi^\ve_\a{\rm d}x{\rm d}t +
\frac{1}{\ve}\sum_{\a=1}^N\:\:\iti\iR\tilde b^\ve_\a \cdot \nabla
v^\ve_\a\phi^\ve_\a{\rm d}x{\rm d}t\\[0.3 cm]
+\dsp\sum_{\a=1}^N\:\:\iti\iR \tilde D^\ve_\a \nabla v^\ve_\a\cdot \nabla
\phi^\ve_\a{\rm d}x{\rm d}t
+\frac{1}{\ve^2}\dsp\sum_{\a=1}^N\:\:\sum_{\b=1}^N\:\:\iti\iR\Pi^\ve_{\a\b}\v^{*\ve}_\a\v^\ve_\b(v^\ve_\b-v^\ve_\a)\phi^\ve_\a
= 0.
\end{array}
$$
Substituting for $\phi^\ve_\a$ in the above variational formulation and
integrating by parts leads to
\begin{equation}
\label{eq:vf:1}
\begin{array}{cc}
-\dsp\sum_{\a=1}^N\:\:\iti\iR\v^\ve_\a\v^{*\ve}_\a\rho^\ve_\a v^\ve_\a\frac{\partial
\phi^\ve}{\partial t}{\rm d}x{\rm d}t
-\dsp\sum_{\a=1}^N\:\:\iR\v^\ve_\a\v^{*\ve}_\a\rho^\ve_\a v^\ve_\a(0,x)\phi^\ve(0,x){\rm d}x\\[0.3 cm]
\dsp+\frac{1}{\ve}\dsp\sum_{\a=1}^N\:\:\iti\iR v^\ve_\a\Big(\v^\ve_\a\v^{*\ve}_\a\rho^\ve_\a
b^* - \tilde b^\ve_\a\Big)\cdot \nx \phi^\ve{\rm d}x{\rm d}t\\[0.3 cm]
\dsp-\frac{1}{\ve}\dsp\sum_{\a=1}^N\:\:\iti\iR \div\Big(\tilde b^\ve_\a\Big)
v^\ve_\a\phi^\ve{\rm d}x{\rm d}t
+\sum_{\a=1}^N\:\:\iti\iR\tilde D^\ve_\a \nabla v^\ve_\a\cdot\nx\phi^\ve{\rm
d}x{\rm d}t\\[0.3 cm]
+\dsp\frac{1}{\ve^2}\sum_{\a=1}^N\:\:\sum_{\b=1}^N\:\:\iti\iR
\Pi^\ve_{\a\b}\v^{*\ve}_\a\v^\ve_\b(v^\ve_\b-v^\ve_\a)\phi^\ve{\rm d}x{\rm d}t
+ \sum_{\a=1}^N\:\:\iti\iR \Big(\tilde b^\ve_\a \cdot \nabla
v^\ve_\a\Big)\phi_{1,\a}^\ve{\rm d}x{\rm d}t\\[0.3 cm]
+\dsp\sum_{\a=1}^N\:\:\iti\iR\v^\ve_\a\v^{*\ve}_\a\rho^\ve_\a v^\ve_\a b^*\cdot \nabla
\phi_{1,\a}^\ve{\rm d}x{\rm d}t
+ \dsp\sum_{\a=1}^N\:\:\iti\iR \tilde D^\ve_\a \nabla v^\ve_\a\cdot \ny
\phi_{1,\a}^\ve{\rm d}x{\rm d}t\\[0.3 cm]
+ \dsp\frac{1}{\ve}\sum_{\a=1}^N\:\:\sum_{\b=1}^N\:\:\iti\iR
\Pi^\ve_{\a\b}\v^{*\ve}_\a\v^\ve_\b(v^\ve_\b-v^\ve_\a)\phi_{1,\a}^\ve{\rm d}x{\rm d}t
+ {\mathcal O}(\ve) = 0.
\end{array}
\end{equation}

In a first step we choose $\phi^\ve\equiv0$ in (\ref{eq:vf:1}) and pass to the limit as
$\ve\to0$ which yields:
\begin{equation}
\label{eq:vf:cpb}
\begin{array}{cc}
-\dsp\sum_{\a=1}^N\:\:\iti\iR\iy\v_\a\v^*_\a\rho_\a b^*\cdot\nx
v\phi_{1,\a}{\rm d}y{\rm d}x{\rm d}t\\[0.3 cm]
+ \dsp\sum_{\a=1}^N\:\:\iti\iR\iy\tilde b_\a\cdot\Big(\nx v + \ny v_{1,\a}\Big)
\phi_{1,\a}{\rm d}y{\rm d}x{\rm d}t\\[0.3 cm]
-\dsp\sum_{\a=1}^N\:\:\iti\iR\iy \div_y\Big(\tilde D_\a\Big(\nx v + \ny
v_{1,\a}\Big)\Big)\phi_{1,\a}{\rm d}y{\rm d}x{\rm d}t\\[0.3 cm]
+ \dsp\sum_{\a=1}^N\:\:\sum_{\b=1}^N\:\:\iti\iR\iy
\Pi_{\a\b}\v^*_\a\v_\b\Big(v_{1,\b}-v_{1,\a}\Big)\phi_{1,\a}(t,x,y){\rm d}y{\rm
d}x{\rm d}t = 0.
\end{array}
\end{equation}
The above expression is the variational formulation for the following PDE:
\begin{equation}
\left\{
\begin{array}{cc}
\dsp\tilde b_\a\cdot \Big(\ny v_{1,\a} + \nx v\Big) - \div_y \Big(\tilde D_\a
\Big(\nabla_y v_{1,\a}+\nx v\Big)\Big)\\[0.3 cm]
+ \dsp\sum_{\b=1}^N \Pi_{\a\b}\v^*_\a\v_\b(v_{1,\b}-v_{1,\a})=
\v_\a\v^*_\a\rho_\a b^*\cdot \nx v & \mbox{ in }Y, \\[0.2 cm]
y \to v_{1,\a}(y) & Y\mbox{-periodic,}
\end{array} \right.
\label{eq:vv1}
\end{equation}
for every $1\le\a\le N$. By the Fredholm result of Lemma \ref{lem:fh}, we have
the existence and uniqueness of $(v_{1,\a})_{1\le\a\le N}\in L^2((0,T)\times\R^d;\mathscr{H}(Y))$
if and only if the compatibility condition (\ref{eq:fhc}) is satisfied. Writing
down the compatibility condition for (\ref{eq:vv1}) yields the expression
(\ref{eq:drift}) for the drift velocity $b^*$. Also by linearity of
(\ref{eq:vv1}), we deduce that we can separate the slow and fast variables in
$v_{1,\a}$ as in (\ref{eq:2scl:corr}) with $(\omega_{i,\a})_{1\le\a\le N}$
satisfying the coupled cell problem (\ref{eq:cpb}).

In a second step we choose $\phi^\ve_{1,\a}\equiv0$ in (\ref{eq:vf:1}) and 
substitute (\ref{eq:in1}) for the initial data $v^\ve_\a(0,x)$, which yields 
\begin{equation}
\label{eq:vf:2}
\begin{array}{ll}
-\dsp\sum_{\a=1}^N\:\:\iti\iR\v^\ve_\a\v^{*\ve}_\a\rho_\a^\ve v^\ve_\a\frac{\partial
\phi^\ve}{\partial t}{\rm d}x{\rm d}t
-\dsp\sum_{\a=1}^N\:\:\iR\v^{*\ve}_\a\rho_\a^\ve u^{in}_\a(x)\phi^\ve(0,x){\rm d}x\\[0.2 cm]
+\dsp\frac{1}{\ve}\dsp\sum_{\a=1}^N\:\:\iti\iR\v^\ve_\a\v^{*\ve}_\a v^\ve_\a\rho_\a^\ve
b^*\cdot \nx \phi^\ve{\rm d}x{\rm d}t
-\frac{1}{\ve}\dsp\sum_{\a=1}^N\:\:\iti\iR v^\ve_\a\tilde b^\ve_\a
\cdot\nx\phi^\ve{\rm d}x{\rm d}t\\[0.2 cm]
-\dsp\frac{1}{\ve}\dsp\sum_{\a=1}^N\:\:\iti\iR \div\Big(\tilde b^\ve_\a\Big)
v^\ve_\a\phi^\ve{\rm d}x{\rm d}t
+\dsp\sum_{\a=1}^N\:\:\iti\iR\tilde D^\ve_\a \nabla
v^\ve_\a\cdot\nx\phi^\ve{\rm d}x{\rm d}t\\[0.2 cm]
+\dsp\frac{1}{\ve^2}\sum_{\a=1}^N\:\:\sum_{\b=1}^N\:\:\iti\iR
\Pi^\ve_{\a\b}\v^{*\ve}_\a\v^\ve_\b(v^\ve_\b-v^\ve_\a)\phi^\ve{\rm d}x{\rm d}t = 0.
\end{array}
\end{equation}
Using the expression (\ref{eq:divb}) for the divergence of $\tilde b_\a$ allows us 
to obtain
$$
\dsp-\frac{1}{\ve}\dsp\sum_{\a=1}^N\:\:\iti\iR \div\Big(\tilde b^\ve_\a\Big)
v^\ve_\a\phi^\ve{\rm d}x{\rm d}t
+\dsp\frac{1}{\ve^2}\sum_{\a=1}^N\:\:\sum_{\b=1}^N\:\:\iti\iR
\Pi^\ve_{\a\b}\v^{*\ve}_\a\v^\ve_\b(v^\ve_\b-v^\ve_\a)\phi^\ve{\rm d}x{\rm d}t
$$
\begin{equation}
\label{eq:vf:div}
\begin{array}{ll}
= \dsp\frac{1}{\ve^2}\sum_{\a=1}^N\:\:\sum_{\b=1}^N\:\:\iti\iR
\Big(\Pi^\ve_{\a\b}\v^{*\ve}_\a\v^\ve_\b v^\ve_\a - \Pi_{\a\b}^{\ve *}\v^\ve_\a\v^{*\ve}_\b
v^\ve_\a\Big)\phi^\ve{\rm d}x{\rm d}t\\[0.2 cm]
+\dsp\frac{1}{\ve^2}\sum_{\a=1}^N\:\:\sum_{\b=1}^N\:\:\iti\iR
\Big(\Pi^\ve_{\a\b}\v^{*\ve}_\a\v^\ve_\b v^\ve_\b - \Pi^\ve_{\a\b}\v^{*\ve}_\a\v^\ve_\b v^\ve_\a \Big)\phi^\ve{\rm d}x{\rm d}t=0.
\end{array}
\end{equation}
Thanks to (\ref{eq:vf:div}) all terms of order $\mathcal O (\ve^{-2})$ in
(\ref{eq:vf:2}) cancel each other. There are, however, terms of $\mathcal O
(\ve^{-1})$ in (\ref{eq:vf:2}) which still prevent us to pass to the limit
as $\ve\to0$. In order to remedy the situation, we introduce the following
auxiliary problem posed in the unit cell:
\begin{equation}
\label{eq:aux}
\left\{
\begin{array}{ll}
\dsp-\Delta \Xi = \dsp\sum_{\a=1}^N\:\Big(\v_\a\v^*_\a\rho_\a b^* - \tilde
b_\a\Big)  & \mbox{ in }Y, \\[0.2 cm]
y \to \Xi(y) & Y\mbox{-periodic.}
\end{array} \right.
\end{equation}
The above auxiliary problem is well-posed, thanks to our choice (\ref{eq:drift}) 
of the drift velocity and the chosen normalization (\ref{eq:norm}). We scale 
(\ref{eq:aux}) to the entire domain via the change of variables $y\to\ve^{-1}x$. 
The vector-valued function $\Xi^\ve(x) =\Xi(x/\ve)$ satisfies
\begin{equation}
\label{eq:aux:scl}
\left\{
\begin{array}{ll}
\dsp-\ve^2 \Delta \Xi^\ve =
\dsp\sum_{\a=1}^N\:\Big(\v^\ve_\a\v^{*\ve}_\a\rho_\a^\ve b^* - \tilde b^\ve_\a\Big) &
\mbox{ in }\R^d,\\[0.2 cm]
x \to \Xi^\ve & \ve Y\mbox{-periodic.}
\end{array} \right.
\end{equation}
Getting back to the variational formulation (\ref{eq:vf:2}), let us regroup the
problematic terms of order $\mathcal O (\ve^{-1})$:
$$
\dsp\frac{1}{\ve}\dsp\sum_{\a=1}^N\:\:\iti\iR v^\ve_\a\Big(\v^\ve_\a\v^{*\ve}_\a
v^\ve_\a\rho_\a^\ve b^* - b^\ve_\a\Big)\cdot \nx \phi^\ve{\rm d}x{\rm d}t
$$
\begin{equation}
\label{eq:vf:rerng}
\begin{array}{ll}
\dsp=-\frac{\ve^2}{\ve}\sum_{i=1}^d\iti\iR \Delta \Xi_i^\ve \frac{\partial
\phi^\ve}{\partial x_i}v^\ve_\a{\rm d}x{\rm d}t
+\frac{1}{\ve}\dsp\sum_{\b=1}^N\:\iti\iR(v^\ve_\a-v^\ve_\b)\tilde b^\ve_\b
\cdot\nx\phi^\ve{\rm d}x{\rm d}t\\[0.3 cm]
+\dsp\frac{1}{\ve}\dsp\sum_{\b=1}^N\:\iti\iR\v^\ve_\b\v^{*\ve}_\b\rho_\b^\ve(v^\ve_\b-v^\ve_\a)
b^*\cdot \nx \phi^\ve{\rm d}x{\rm d}t
\end{array}
\end{equation}
\begin{equation}
\label{eq:vf:sing}
\begin{array}{ll}
\dsp= \ve\sum_{i=1}^d\iti\iR\nabla \Xi_i^\ve\cdot\nabla\Big(\frac{\partial
\phi^\ve}{\partial x_i}\Big)v^\ve_\a{\rm d}x{\rm d}t +
\ve\sum_{i=1}^d\iti\iR\nabla \Xi_i^\ve\cdot\nabla v^\ve_\a \Big(\frac{\partial
\phi^\ve}{\partial x_i}\Big){\rm d}x{\rm d}t\\[0.3 cm]
\dsp+\frac{1}{\ve}\dsp\sum_{\b=1}^N\:\iti\iR\v^\ve_\b\v^{*\ve}_\b\rho_\b^\ve(v^\ve_\b-v^\ve_\a)
b^*\cdot \nx \phi^\ve{\rm d}x{\rm d}t\\[0.3 cm]
\dsp+\frac{1}{\ve}\dsp\sum_{\b=1}^N\:\iti\iR(v^\ve_\a-v^\ve_\b)\tilde b^\ve_\b
\cdot\nx\phi^\ve{\rm d}x{\rm d}t ,
\end{array}
\end{equation}
where we have used the scaled auxiliary problem (\ref{eq:aux:scl}). 
We can now pass to the limit in (\ref{eq:vf:sing}) since the sequences 
$(v^\ve_\b-v^\ve_\a)/\ve$ are bounded. Taking into consideration
(\ref{eq:vf:div}) and (\ref{eq:vf:sing}), the variational formulation
(\ref{eq:vf:2}) rewrites as
\begin{equation}
\label{eq:vf:3}
\begin{array}{ll}
-\dsp\sum_{\a=1}^N\:\:\iti\iR\v^\ve_\a\v^{*\ve}_\a\rho_\a^\ve v^\ve_\a\frac{\partial
\phi^\ve}{\partial t}{\rm d}x{\rm d}t
-\dsp\sum_{\a=1}^N\:\:\iR\v^{*\ve}_\a\rho_\a^\ve u^{in}_\a(x)\phi^\ve(0,x){\rm d}x\\[0.2 cm]
\dsp+\ve\sum_{i=1}^d\iti\iR\nabla \Xi_i^\ve\cdot\nabla\Big(\frac{\partial
\phi^\ve}{\partial x_i}\Big)v^\ve_\a{\rm d}x{\rm d}t +
\ve\sum_{i=1}^d\iti\iR\nabla \Xi_i^\ve\cdot\nabla v^\ve_\a \Big(\frac{\partial
\phi^\ve}{\partial x_i}\Big){\rm d}x{\rm d}t\\[0.2 cm]
\dsp+\frac{1}{\ve}\dsp\sum_{\b=1}^N\:\iti\iR\v^\ve_\b\v^{*\ve}_\b\rho_\b^\ve(v^\ve_\b-v^\ve_\a)
b^*\cdot \nx \phi^\ve{\rm d}x{\rm d}t
+\dsp\sum_{\a=1}^N\:\:\iti\iR\tilde D^\ve_\a \nabla
v^\ve_\a\cdot\nx\phi^\ve{\rm d}x{\rm d}t\\[0.2 cm]
\dsp+\frac{1}{\ve}\dsp\sum_{\b=1}^N\:\iti\iR(v^\ve_\a-v^\ve_\b)\tilde b^\ve_\b
\cdot\nx\phi^\ve{\rm d}x{\rm d}t=0.
\end{array}
\end{equation}
Using the compactness results from Theorem \ref{thm:3:2scl}, we
pass to the limit as $\ve\to0$ in the above variational formulation leading to:
\begin{equation}
\label{eq:vf:4}
\begin{array}{ll}
-\dsp\iti\iR v\frac{\partial \phi}{\partial t}{\rm d}x{\rm d}t 
-\dsp\sum_{\a=1}^N\:\iR\iy u^{in}_\a(x)\phi(0,x) \v^{*}_\a\rho_\a{\rm d}y{\rm d}x\\[0.2 cm]
+\dsp\sum_{\a=1}^N\:\iti\iR\iy\tilde D_\a \Big(\nabla v + \ny
v_{1,\a}\Big)\cdot\nx\phi{\rm d}y{\rm d}x{\rm d}t\\[0.2 cm]
\dsp+\sum_{i=1}^d\iti\iR\iy\ny\Xi_i\cdot\ny v_{1,\a} \frac{\partial
\phi}{\partial x_i}{\rm d}y{\rm d}x{\rm d}t\\[0.2 cm]
+\dsp\sum_{\b=1}^N\:\iti\iR\iy\v_\b\v^*_\b\rho_\b\Big(v_{1,\b}-v_{1,\a}\Big)
b^*\cdot \nx \phi{\rm d}y{\rm d}x{\rm d}t\\[0.2 cm]
+\dsp\sum_{\b=1}^N\:\iti\iR\iy\Big(v_{1,\a}-v_{1,\b}\Big)\tilde b_\b
\cdot\nx\phi{\rm d}y{\rm d}x{\rm d}t = 0.
\end{array}
\end{equation}
Substituting (\ref{eq:2scl:corr}) for $v_{1,\a}$ in
(\ref{eq:vf:4}), we obtain
\begin{equation}
\label{eq:vf:lim}
\begin{array}{ll}
-\dsp\iti\iR v\frac{\partial \phi}{\partial t}{\rm d}x{\rm d}t
-\dsp\sum_{\a=1}^N\:\iR u^{in}_\a(x)\phi(0,x)\iy \v^{*}_\a(y)\rho_\a(y){\rm d}y{\rm d}x\\[0.2 cm]
+\dsp\sum_{\a=1}^N\:\sum_{i,j=1}^d\:\iti\iR\frac{\partial v}{\partial
x_j}\frac{\partial \phi}{\partial x_i}\iy \tilde D_\a\Big(\nabla y_j  +
\ny\omega_{j,\a}\Big)\cdot\nabla y_i{\rm d}y{\rm d}x{\rm d}t\\[0.2 cm]
-\dsp\sum_{i,j=1}^d\iti\iR\frac{\partial v}{\partial x_j}\frac{\partial
\phi}{\partial x_i}\iy\Big(\Delta_y\Xi_i\Big) \omega_{j,\a}{\rm d}y{\rm d}x{\rm
d}t\\[0.2 cm]
+\dsp\sum_{\b=1}^N\:\sum_{i,j=1}^d\:\iti\iR\frac{\partial v}{\partial
x_j}\frac{\partial \phi}{\partial x_i}\iy\v_\b\v^*_\b\rho_\b \Big(\omega_{j,\b}
- \omega_{j,\a}\Big) b^*\cdot\nabla y_i{\rm d}y{\rm d}x{\rm d}t\\[0.2 cm]
+\dsp\sum_{\b=1}^N\:\sum_{i,j=1}^d\:\iti\iR\frac{\partial v}{\partial
x_j}\frac{\partial \phi}{\partial x_i}\iy\Big(\omega_{j,\a} -
\omega_{j,\b}\Big) \tilde b_\b\cdot\nabla y_i{\rm d}y{\rm d}x{\rm d}t = 0.
\end{array}
\end{equation}
Using the information from the auxiliary cell problem (\ref{eq:aux}) in
(\ref{eq:vf:lim}) and making a rearrangement similar to that of
(\ref{eq:vf:rerng}), we deduce that (\ref{eq:vf:lim}) is nothing but the
variational formulation for a scalar diffusion equation (\ref{eq:hom}) for
$v(t,x)$ with the entries of the diffusion matrix given by
$$
{\mathcal D}_{ij} = \dsp\sum_{\a=1}^N\:\iy\tilde D_\a\Big(\nabla y_j +
\ny\omega_{j,\a}\Big)\cdot\nabla y_i{\rm d}y
+\dsp\sum_{\a=1}^N\:\:\iy \omega_{j,\a}\Big(\v_\a\v^*_\a v^\ve_\a\rho_\a^\ve
b^* - b^\ve_\a\Big)\cdot e_i{\rm d}y.
$$
By integration by parts, it is clear that the diffusion matrix $\mathcal D$ 
is contracted with the Hessian matrix $\nabla\nabla v$, which is symmetric. 
Thus the non-symmetric part of $\mathcal D$ does not contribute to the homogenized 
equation (\ref{eq:hom}). So, the above expression for the diffusion matrix is
symmetrized:
\begin{equation}
\label{eq:disp:sym}
\begin{array}{ll}
{\mathcal D}_{ij} = \dsp\sum_{\a=1}^N\:\:\iy\tilde D_\a e_j\cdot e_i{\rm d}y
+\frac12 \Big\{\dsp\sum_{\a=1}^N\:\:\iy\Big(\tilde D_\a\ny\omega_{i,\a}\cdot
e_j + \tilde D_\a\ny\omega_{j,\a}\cdot e_i\Big){\rm d}y\Big\}\\[0.2 cm]
+\dsp \frac12 \Big\{\dsp\sum_{\a=1}^N\:\:\iy\Big(
\omega_{i,\a}(\v_\a\v^*_\a\rho_\a b^* -\tilde b_\a)\cdot e_j +
\omega_{j,\a}(\v_\a\v^*_\a\rho_\a b^* -\tilde b_\a)\cdot e_i\Big){\rm
d}y\Big\}.
\end{array}
\end{equation}
To obtain the desired expression (\ref{eq:disp}) for the diffusion matrix,
we consider the variational formulation for the cell problem
(\ref{eq:cpb}) with test functions $(\psi_\a)_{1\le\a\le N}$ 
\begin{equation}
\label{eq:vf:cpb1}
\begin{array}{ll}
\dsp\sum_{\a=1}^N\:\:\iy\Big(\tilde b_\a\cdot\ny\omega_{i,\a}\Big)\psi_\a{\rm
d}y+\dsp\sum_{\a=1}^N\:\:\sum_{\b=1}^N\:\:\iy\Pi_{\a\b}
\v^*_\a\v_\b\Big(\omega_{i,\b}-\omega_{i,\a}\Big)\psi_\a{\rm d}y\\[0.2 cm]
+\dsp\sum_{\a=1}^N\:\:\iy\tilde D_\a
\Big(\nabla_y\omega_{i,\a}+e_i\Big)\cdot\ny\psi_\a{\rm
d}y=\dsp\sum_{\a=1}^N\:\:\iy\Big(\v_\a\v^*_\a\rho_\a b^* - \tilde
b_\a\Big)\cdot e_i\psi_\a{\rm d}y.
\end{array}
\end{equation}
In (\ref{eq:vf:cpb1}) we first choose the test function $(\psi_\a)=(\omega_{j,\a})$. 
Similarly, in (\ref{eq:vf:cpb1}) for $j$ instead of $i$, we choose the test function 
$(\psi_\a)=(\omega_{i,\a})$. This leads to
\begin{equation}
\label{eq:vf:cpb2}
\begin{array}{ll}
\dsp\frac12 \Big\{\dsp\sum_{\a=1}^N\:\:\iy\Big(
\omega_{i,\a}(\v_\a\v^*_\a\rho_\a b^* -\tilde b_\a)\cdot e_j +
\omega_{j,\a}(\v_\a\v^*_\a\rho_\a b^* -\tilde b_\a)\cdot e_i\Big){\rm
d}y\Big\}\\[0.2 cm]
=\dsp\sum_{\a=1}^N\:\:\iy\tilde D_\a\ny\omega_{i,\a}\cdot\ny\omega_{j,\a}{\rm
d}y+\frac12 \Big\{\dsp\sum_{\a=1}^N\:\:\iy\Big(\tilde D_\a\ny\omega_{i,\a}\cdot
e_j + \tilde D_\a\ny\omega_{j,\a}\cdot e_i\Big){\rm d}y\Big\}\\[0.2 cm]
\dsp-\frac12\Big\{\dsp\sum_{\a=1}^N\:\:\iy\omega_{i,\a}\omega_{j,\a}\div_y\tilde
b_\a{\rm d}y\Big\}\\[0.2 cm]
\dsp+\frac12 \Big\{\dsp\sum_{\a=1}^N\:\:\sum_{\b=1}^N\:\:\iy\Big(\Pi_{\a\b}
\v^*_\a\v_\b(\omega_{i,\b}-\omega_{i,\a})\omega_{j,\a} + \Pi_{\a\b}
\v^*_\a\v_\b(\omega_{j,\b}-\omega_{j,\a})\omega_{i,\a}\Big){\rm d}y\Big\}.
\end{array}
\end{equation}
Using formula (\ref{eq:divb}) for the divergence of $\tilde b_\a$ in 
(\ref{eq:vf:cpb2}), its right hand side simplifies as
\begin{equation}
\label{eq:vf:cpb3}
\begin{array}{ll}
\dsp\sum_{\a=1}^N\:\:\iy\tilde D_\a\ny\omega_{i,\a}\cdot\ny\omega_{j,\a}{\rm
d}y+\frac12 \Big\{\dsp\sum_{\a=1}^N\:\:\iy\Big(\tilde D_\a\ny\omega_{i,\a}\cdot
e_j + \tilde D_\a\ny\omega_{j,\a}\cdot e_i\Big){\rm d}y\Big\}\\[0.2 cm]
\dsp-\frac12\dsp\sum_{\a=1}^N\:\:\sum_{\b=1}^N\:\:\iy
\v^*_\a\v_\b\Pi_{\a\b}\Big(\omega_{i,\a} - \omega_{i,\b}\Big)\Big(\omega_{j,\a}
- \omega_{j,\b}\Big){\rm d}y.
\end{array}
\end{equation}
Plugging (\ref{eq:vf:cpb3}) in the symmetrized formula (\ref{eq:disp:sym}) 
leads to the desired equation (\ref{eq:disp}). 
Eventually the scalar homogenized equation (\ref{eq:hom}) has a unique 
solution since, by virtue of Lemma \ref{lem:disp}, the dispersion matrix 
is positive definite. This guarantees that the entire sequence $v^\ve_\a$ 
converges to $v$, for $1\le\a\le N$, and not merely a subsequence as in 
Theorem \ref{thm:3:2scl}.
\end{proof}

\section{Adsorption in Porous Media}
\label{sec:apm}

In this section, we give a generalization of our previous result in a
more applied context.
Our goal is to upscale a model of multicomponent transport in an highly
heterogeneous porous medium in presence of adsorption reaction at the
fluid-pore interface. In \cite{AR:07}, the authors study the homogenization of
one single scalar convection-diffusion-reaction equation posed in an
$\ve$-periodic infinite porous medium:
\begin{equation}
\label{eq:ar07}
\left\{
\begin{array}{ll}
\dsp\rho^\ve\frac{\partial u^\ve}{\partial t} + \frac{1}{\ve}b^\ve\cdot\nabla
u^\ve - \div(D^\ve\nabla u^\ve) + \frac{1}{\ve^2} c^\ve u^\ve = 0 & \mbox{ in
}(0,T)\times\Omega_\ve,\\[0.2 cm]
\dsp-D^\ve\nabla u^\ve\cdot n = \frac{1}{\ve}\kappa u^\ve & \mbox{ on
}(0,T)\times\partial\Omega_\ve.
\end{array}
\right.
\end{equation}

Typically, an $\ve$-periodic infinite porous medium is built out of $\R^d$
($d=2$ or $3$, being the space dimension) by removing a periodic distribution
of solid obstacles which, after rescaling, are all similar to the unit obstacle
$\Sigma^0$. More precisely, let $Y = ]0,1[^d$ be the unit periodicity cell. Let
us consider a smooth partition $Y = \Sigma^0 \cup Y^0$ where $\Sigma^0$ is the
solid part and $Y^0$ is the fluid part. The fluid part (extended by
periodicity) is assumed to  be a smooth connected open subset whereas no
particular assumptions are made on the solid part. For each multi-index
$j\in\mathbb{Z}^d$, we define $Y^j_\ve := \ve(Y^0+j)$, $\Sigma^j_\ve :=
\ve(\Sigma^0+j)$, $S^j_\ve := \ve(\partial\Sigma^0+j)$, the periodic porous
medium $\Omega_\ve := \displaystyle
\cup_{j\in\mathbb{Z}^d} Y^j_\ve$ and the $(d-1)-$dimensional surface
$\partial\Omega_\ve := \cup_{j\in\mathbb{Z}^d} S^j_\ve$.

In this section, we generalize the results of \cite{AR:07} to the
multicomponent case. We consider the following weakly coupled cooperative
parabolic system with Neumann boundary condition at the fluid-pore interface.
\begin{equation}
\label{eq:cdb}
\left\{
\begin{array}{cc}
\dsp\rho^\ve_\a \frac{\partial u^\ve_\a}{\partial t} + \frac1\ve
b^\ve_\a\cdot\nabla u^\ve_\a - \div(D^\ve_\a\nabla u^\ve_\a) = 0&\mbox{ in
}(0,T)\times\Omega_\ve,\\[0.2 cm]
\dsp- D^\ve_\a\nabla u^\ve_\a\cdot n = \frac1{\ve}\sum_{\b=1}^N
\Pi^\ve_{\a\b}u^\ve_\b &\mbox{ on }(0,T)\times\partial\Omega_\ve,\\[0.2 cm]
\dsp u^\ve_\a(0,x) = u^{in}_\a(x)& \mbox{ in }\Omega_\ve .
\end{array}
\right.
\end{equation}

\begin{remark}
Note the different scaling in front of the surface reaction terms. 
It is of order $\ve^{-1}$ because it balances a flux rather than a 
diffusive term, as in the previous model of Section \ref{sec:model}. 
As usual, by the change of variable $(\tau,y)\to(\ve^{-2}t,\ve^{-1}x)$ 
all singular powers of $\ve$ disappears in (\ref{eq:cdb}) written in 
the $(\tau,y)$ variables.
\end{remark}

The hypotheses on the coefficients in (\ref{eq:cdb}) are exactly the
same as in Section \ref{sec:model}. As before it is impossible to
obtain uniform (in $\ve$) estimates on the solutions $u^\ve_\a$ of
(\ref{eq:cdb}). As was done in Section \ref{sec:fp}, we employ the method of
factorization by introducing a new unknown:
$$
v^\ve_\a(t,x)=\exp{(\lambda
t/\ve^2)}\frac{u^\ve_\a(t,x)}{\v_\a\Big(\frac{x}{\ve}\Big)},
$$
where $(\lambda,\v_\a)$ and $(\lambda,\v^*_\a)$ are the principal eigenpairs
associated with the (new) following spectral problems respectively:
\begin{equation}
\label{eq:scp:b}
\left\{
\begin{array}{ll}
\dsp b_\a(y) \cdot \ny \v_\a - \div_y \Big(D_\a \ny \v_\a\Big) = \lambda\rho_\a
\v_\a & \mbox{ in } Y^0, \\[0.2 cm]
\dsp- D_\a\ny\v_\a\cdot n = \sum_{\b=1}^N \Pi_{\a\b} \v_\b & \mbox{ on
}\partial\Sigma^0,\\[0.2 cm]
y \to \v_\a(y) & Y\mbox{-periodic.}
\end{array} \right.
\end{equation}
\begin{equation}
\label{eq:ascp:b}
\left\{
\begin{array}{ll}
\dsp -\div_y(b_\a \v^*_\a) - \div_y \Big(D_\a \ny \v^*_\a\Big) = \lambda\rho_\a
\v^*_\a & \mbox{ in } Y^0, \\[0.2 cm]
\dsp- D_\a\ny\v^*_\a\cdot n - b_\a(y)\cdot n \v^*_\a = \sum_{\b=1}^N
\Pi^*_{\a\b} \v^*_\b & \mbox{ on }\partial\Sigma^0,\\[0.2 cm]
y \to \v^*_\a(y) & Y\mbox{-periodic.}
\end{array} \right.
\end{equation}
Proposition \ref{prop:spec}, which guarantees the existence of
principal eigenpairs for the spectral problems (\ref{eq:scp})-(\ref{eq:ascp}), 
carries over to the above spectral problems (\ref{eq:scp:b})-(\ref{eq:ascp:b}) as
well. This is apparent from the proofs in \cite{Sw:92, MS:95}. The normalization (ensuring uniqueness of the eigenfunctions) that we choose is:
$$
\dsp \sum_{\a=1}^N\: \int_{Y^0} \v_\a\v^*_\a \rho_\a \,{\rm d}y = 1.
$$
As in Section \ref{sec:fp} it is a matter of simple algebra to obtain 
the factorized system for (\ref{eq:cdb}) with the new unknown which is, 
for each $1 \le \alpha \le N$,
\begin{equation}
\label{eq:cdb1}
\left\{
\begin{array}{ll}
\dsp\v^\ve_\a\v^{*\ve}_\a\rho^\ve_\a\frac{\partial v^\ve_\a}{\partial t} +
\frac{1}{\ve}\tilde b^\ve_\a \cdot \nabla v^\ve_\a - \div\left(\tilde D^\ve_\a
\nabla v^\ve_\a\right) = 0 & \mbox{ in }(0,T)\times\Omega_\ve,\\[0.2 cm]
\dsp-\tilde D^\ve_\a\nabla v^\ve_\a\cdot n =
\frac{1}{\ve}\dsp\sum_{\b=1}^N\:\:\Pi^\ve_{\a\b}\v^{*\ve}_\a\v^\ve_\b(v^\ve_\b-v^\ve_\a)
& \mbox{ on }(0,T)\times\partial\Omega_\ve,\\[0.2 cm]
\dsp v^\ve_\a(0,x) = \frac{u^{in}_\a(x)}{\v_\a\Big(\frac{x}{\ve}\Big)}&
\mbox{ in } \Omega_\ve,
\end{array}
\right.
\end{equation}
where the convective fields $\tilde b_\a$ and diffusion matrices $\tilde D_\a$ 
are given by the same formulae (\ref{eq:b1}) and (\ref{eq:D1}). A proof, 
completely similar to that of Lemma \ref{lem:apriori}, yields 
the following a priori estimates 
(\ref{eq:cdb1}):
\begin{equation}
\label{eq:cdb1:ap}
\begin{array}{ll}
\dsp\sum_{\a=1}^N\:\:\Big\|v^\ve_\a\Big\|_{L^\infty((0,T);L^2(\Omega_\ve))}+\sum_{\a=1}^N\:\:\Big\|\nabla
v^\ve_\a\Big\|_{L^2((0,T)\times\Omega_\ve)}\\[0.4 cm]
+\dsp\sum_{\a=1}^N\:\:\sum_{\b=1}^N\:\:\sqrt{\ve}\Big\|\frac1\ve(v^\ve_\a-v^\ve_\b)\Big\|_{L^2((0,T)\times\partial\Omega_\ve)}\leq
C\sum_{\a=1}^N\:\:\|v^{in}_\a\|_{L^2(\R^d)}.
\end{array}
\end{equation}

\begin{remark}
\label{rem:surf}
Since the $(d-1)$ dimensional measure of the periodic surface
$\partial\Omega_\ve$ is of order $\mathcal O(\ve^{-1})$, a bound of the type
$\sqrt{\ve}\|z_\ve\|_{L^2(\partial\Omega_\ve)}\le C$ means that the sequence 
$z_\ve$ is bounded on the surface $\partial\Omega_\ve$.
\end{remark}

In the a priori estimates (\ref{eq:cdb1:ap}), we have bounds in function spaces
defined on the periodic surface $\partial\Omega_\ve$. In order to speak of the
convergence of sequences in such function spaces, we need to generalize the
Definition \ref{def:2scl} of two-scale convergence with drift for periodic
surfaces. This generalization was introduced in \cite{Hu:13}. We state this 
definition together with the corresponding compactness result (the proof of 
which is similar to that of Theorem 9.1 in \cite{Al:08}). 

\begin{lemma}
\label{lem:2ds}
Let $b^*\in\R^d$ be a constant vector. Suppose that $u_\ve(t,x)$ is a sequence 
of functions uniformly bounded in $L^2((0,T)\times\partial\Omega_\ve)$ in the
sense that
$$
\sqrt{\ve}\|u_\ve\|_{L^2((0,T)\times\partial\Omega_\ve)}\le C .
$$
Then, there exists a subsequence, still denoted by $u_\ve(t,x)$, and a function $u_0(t,x,y)\in L^2((0,T)\times\R^d\times \partial\Sigma^0)$ such that 
\begin{equation}
\label{eq:defn2ds}
\begin{array}{cc}
\dsp\lim_{\ve\to0}\ve\iti\int_{\partial\Omega_\ve}
u_\ve(t,x)\phi\Big(t,x-\frac{b^*}{\ve}t,\frac{x}{\ve}\Big){\rm d}\sigma_\ve(x){\rm d}t \\[0.2 cm]
\dsp= \iti\iR\int_{\partial\Sigma^0} u_0(t,x,y)\phi(t,x,y){\rm d}\sigma(y){\rm d}x{\rm d}t ,
\end{array}
\end{equation}
for any function $\phi(t,x,y)\in C^\infty_c((0,T)\times\R^d;C^\infty_\#(Y))$.
\end{lemma}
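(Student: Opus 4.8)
The plan is to mimic the classical proof of two-scale compactness on periodic surfaces (as in Theorem~9.1 of \cite{Al:08}), the only genuinely new feature being the drift sitting in the slow variable of the test function; I would absorb it via a translation in the unbounded domain $\R^d$. First I would introduce, for any admissible test function $\phi\in C^\infty_c((0,T)\times\R^d;C^\infty_\#(Y))$, the family of linear forms
\[
\mu_\ve(\phi):=\ve\iti\int_{\partial\Omega_\ve}u_\ve(t,x)\,\phi\Big(t,x-\frac{b^*}{\ve}t,\frac{x}{\ve}\Big){\rm d}\sigma_\ve(x){\rm d}t,
\]
and bound them by Cauchy--Schwarz,
\[
|\mu_\ve(\phi)|\le\Big(\ve\iti\int_{\partial\Omega_\ve}|u_\ve|^2{\rm d}\sigma_\ve{\rm d}t\Big)^{1/2}\Big(\ve\iti\int_{\partial\Omega_\ve}\Big|\phi\Big(t,x-\tfrac{b^*}{\ve}t,\tfrac{x}{\ve}\Big)\Big|^2{\rm d}\sigma_\ve{\rm d}t\Big)^{1/2},
\]
where the first factor is $\le C$ by hypothesis. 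Everything therefore reduces to a surface mean-value lemma with drift: for every such $\phi$,
\[
\lim_{\ve\to0}\ve\iti\int_{\partial\Omega_\ve}\phi\Big(t,x-\frac{b^*}{\ve}t,\frac{x}{\ve}\Big){\rm d}\sigma_\ve(x){\rm d}t=\iti\iR\int_{\partial\Sigma^0}\phi(t,x,y)\,{\rm d}\sigma(y){\rm d}x{\rm d}t.
\]

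The main obstacle is precisely the proof of this mean-value lemma, and here is how I would carry it out. Fixing $t$ and performing the change of variables $x=\ve\xi$, the surface $\partial\Omega_\ve$ becomes the $\Z^d$-periodic surface $\ve^{-1}\partial\Omega_\ve=\cup_{j\in\Z^d}(\partial\Sigma^0+j)$, with ${\rm d}\sigma_\ve(x)=\ve^{d-1}{\rm d}\sigma(\xi)$, so that the left-hand integrand becomes $\ve^d\sum_{j\in\Z^d}\int_{\partial\Sigma^0}\phi\big(t,\ve\eta+\ve j-\tfrac{b^*}{\ve}t,\eta\big){\rm d}\sigma(\eta)$, where I have written $\xi=\eta+j$ with $\eta\in\partial\Sigma^0$ and used the $Y$-periodicity of $\phi$ in its third argument. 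This is a Riemann sum in the lattice points $z=\ve j\in\ve\Z^d$ with volume element $\ve^d$ (a finite sum, since $\phi$ has compact support in $x$); the $\ve\eta$ correction is negligible, and as $\ve\to0$ it converges to $\int_{\R^d}\int_{\partial\Sigma^0}\phi\big(t,z-\tfrac{b^*}{\ve}t,\eta\big){\rm d}\sigma(\eta){\rm d}z$. The crucial point is that the drift now appears only as a translation $z\mapsto z-b^*t/\ve$ of the integration variable over the \emph{whole} space $\R^d$, hence it may be discarded by invariance of the Lebesgue integral; this is exactly where the unboundedness of the domain is indispensable (cf. Remark~\ref{rem:lim}). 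Integrating in $t$ yields the claimed limit.

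Finally I would assemble the compactness statement. Since $|\phi|^2$ is again an admissible test function (smooth, compactly supported in $x$, $Y$-periodic in $y$), the mean-value lemma applied to it shows that the second Cauchy--Schwarz factor converges to $\|\phi\|_{L^2((0,T)\times\R^d\times\partial\Sigma^0)}$, whence $|\mu_\ve(\phi)|\le C\,\|\phi\|_{L^2((0,T)\times\R^d\times\partial\Sigma^0)}+o(1)$. Exploiting the separability of $L^2((0,T)\times\R^d\times\partial\Sigma^0)$, I would fix a countable dense family of test functions, extract by a diagonal argument a subsequence along which $\mu_\ve(\phi)$ converges for each member of the family, and use the uniform bound to extend the limit $\mu$ to a bounded linear functional on the entire space. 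The Riesz representation theorem then furnishes a function $u_0\in L^2((0,T)\times\R^d\times\partial\Sigma^0)$ with $\mu(\phi)=\iti\iR\int_{\partial\Sigma^0}u_0\,\phi\,{\rm d}\sigma(y){\rm d}x{\rm d}t$, which is precisely the convergence~(\ref{eq:defn2ds}).
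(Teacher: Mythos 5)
Your proof is correct and is exactly the argument the paper has in mind: the paper states this lemma without proof, deferring to Theorem 9.1 of \cite{Al:08} (see also \cite{Hu:13}), and your write-up is precisely that classical surface two-scale compactness proof --- mean-value/Riemann-sum lemma for the test function, Cauchy--Schwarz against the uniform bound, diagonal extraction over a countable dense family, and Riesz representation --- with the single new ingredient, the drift, correctly disposed of by translation invariance of the Lebesgue integral over the unbounded domain $\R^d$. The only detail worth making explicit is that the Riemann-sum error for the smooth, compactly supported function $z\mapsto\int_{\partial\Sigma^0}\phi(t,z,\eta)\,{\rm d}\sigma(\eta)$ must be uniform with respect to the lattice shift, since the shift $b^*t/\ve$ varies with $\ve$; this uniformity is standard for Lipschitz, compactly supported integrands and is what legitimizes discarding the drift before passing to the limit.
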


In (\ref{eq:defn2ds}), ${\rm d}\sigma_\ve(x)$ and ${\rm d}\sigma(y)$ denote the standard surface measures on $\partial\Omega_\ve$ and $\partial\Sigma^0$ respectively. We denote this convergence on periodic surfaces in moving coordinates by $u_\ve \tss u_0$.

\begin{remark}
\label{rem:2ds:1}
Let $u_\ve(t,x)$ be a sequence of functions defined on $(0,T)\times\Omega_\ve$. 
Let $\gamma$ be the trace operator, i.e., $\gamma u = u|_{\partial\Omega_\ve}$. 
Suppose that we have a well-defined sequence of associated trace functions 
$\gamma u_\ve(t,x)$ on $(0,T)\times\partial\Omega_\ve$. If $u_\ve\ts u_0$ and 
$\gamma u_\ve\tss v_0$ with the same drift velocity for both convergences, 
then $\gamma u_0 = v_0$ i.e., $\gamma u_0 = u_0|_{\partial\Sigma^0}=v_0$ 
(see \cite{ADH} for details). 
In the sequel we systematically identify the ``bulk'' and ``surface'' two-scale 
limits. 
\end{remark}

We now define the homogenized velocity which is chosen as the constant drift 
in the definition of two-scale convergence with drift:
\begin{equation}
\label{eq:cdb:drift}
\dsp b^* = \sum_{\a=1}^N\,\int_{Y^0} \tilde b_\a(y){\rm d}y.
\end{equation}

\begin{theorem}
\label{thm:cdb:hom}
Let $(v^\ve_\a)_{1\le\a\le N}\in L^2((0,T);H^1(\Omega_\ve))^N$ be the 
sequence of solutions of (\ref{eq:cdb1}). Let $b^*\in\R^d$ be given by
(\ref{eq:cdb:drift}). There exist $v\in L^2((0,T);H^1(\R^d))$ and
$\omega_{i,\a}\in H_\#^1(Y^0)$, for $1\le\a\le N$ and $1\le i\le d$, 
such that $v^\ve_\a$ two-scale converges with drift $b^*$, as $\ve \to 0$, 
in the following sense:
\begin{equation}
\label{eq:3:brxn:2scl}
\left\{
\begin{array}{ll}
v^\ve_\a \ts v,\\[0.2 cm]
\dsp\nabla v^\ve_\a \ts \nx v + \ny\Big(\sum_{i=1}^d
\omega_{i,\a}\frac{\partial v}{\partial x_i}\Big),\\[0.3 cm]
\dsp\frac{1}{\ve}\Big(v^\ve_\a-v^\ve_\b\Big) \tss \sum_{i=1}^d
\Big(\omega_{i,\a} - \omega_{i,\b}\Big)\frac{\partial v}{\partial x_i},
\end{array}\right.
\end{equation}
for every $1\le\a,\b\le N$. The two-scale limit $v(t,x)$ in
(\ref{eq:3:brxn:2scl}) satisfies the following homogenized equation:
\begin{equation}
\label{eq:3:brxn:hom}
\left\{
\begin{array}{ll}
\dsp\frac{\partial v}{\partial t} - \div({\mathcal D}\nabla v) = 0 &
\textrm{in}\:\: (0,T)\times\R^d, \\[0.3cm]
v(0,x) = \dsp\sum_{\a=1}^N u^{in}_\a(x) \dsp\int_{Y^0}\rho_\a(y) \v^*_\a(y){\rm d}y &
\textrm{in}\:\: \R^d,
\end{array} \right.
\end{equation}
where the dispersion tensor $\mathcal D$ is given by
\begin{equation}
\label{eq:3:brxn:disp}
\begin{array}{ll}
{\mathcal D}_{ij}=\dsp\sum_{\a=1}^N\:\:\int_{Y^0}\tilde
D_\a\Big(\nabla_y\omega_{i,\a} + e_i\Big)\cdot\Big(\nabla_y\omega_{j,\a} +
e_j\Big){\rm d}y\\[0.5 cm]
-\dsp\frac12\sum_{\a,\b=1}^N\:\:\int_{\partial\Sigma^0}
\v^*_\a\v_\b\Pi_{\a\b}\Big(\omega_{i,\a} - \omega_{i,\b}\Big)\Big(\omega_{j,\a}
- \omega_{j,\b}\Big){\rm d}\sigma(y)
\end{array}
\end{equation}
and the components $(\omega_{i,\a})_{1\le\a\le N}$, for every $1\leq i\leq d$,
are the solutions of the cell problems
\begin{equation}
\label{eq:3:brxn:cellpb}
\left\{
\begin{array}{lll}
\tilde b_\a(y)\cdot \Big(\ny\omega_{i,\a} + e_i\Big) - \div_y \Big(\tilde D_\a
\Big(\nabla_y\omega_{i,\a}+e_i\Big)\Big)= \v_\a\v^*_\a\rho_\a b^*\cdot e_i&
\textrm{in}\:\: Y^0, \\[0.3cm]
-\tilde D_\a(\ny \omega_{i,\a}+e_i)\cdot n = \dsp\sum_{\b=1}^N\: \Pi_{\a\b}
\v^*_\a\v_\b\Big(\omega_{i,\b}-\omega_{i,\a}\Big) & \textrm{on} \: \:
\partial\Sigma^0, \\[0.3cm]
y \to \omega_{i,\a} & Y\textrm{-periodic.}
\end{array} \right.
\end{equation}
\end{theorem}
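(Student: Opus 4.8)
The plan is to reproduce, mutatis mutandis, the two-step oscillating-test-function argument of the proof of Theorem \ref{thm:hom}, the essential new feature being that the coupling now sits on the pore boundaries, so that the surface two-scale convergence of Lemma \ref{lem:2ds} must be used in tandem with the bulk one. I would begin with compactness. Extending the $v^\ve_\a$ to $\R^d$ by the standard perforated-domain extension operator, the bulk bounds in (\ref{eq:cdb1:ap}) together with Proposition \ref{prop:2scl} furnish limits $v_\a\in L^2((0,T);H^1(\R^d))$ and $v_{1,\a}\in L^2((0,T)\times\R^d;H^1_\#(Y^0))$ with $v^\ve_\a\ts v_\a$ and $\nabla v^\ve_\a\ts\nx v_\a+\ny v_{1,\a}$, while Lemma \ref{lem:2ds} applied to the surface-bounded sequence $\ve^{-1}(v^\ve_\a-v^\ve_\b)$ yields a surface two-scale limit. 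The delicate point, absent in the bulk model, is that (\ref{eq:cdb1:ap}) only controls the differences $\ve^{-1}(v^\ve_\a-v^\ve_\b)$ on $\partial\Omega_\ve$; to pass to the limit later in the interior I would first upgrade this to a genuine bulk bound by a perforated Poincar\'e inequality: rescaling the cellwise estimate $\|w\|_{L^2(Y^0)}\le C(\|\ny w\|_{L^2(Y^0)}+\|w\|_{L^2(\partial\Sigma^0)})$ to the $\ve$-cells and summing gives $\|w\|^2_{L^2(\Omega_\ve)}\le C(\ve^2\|\nabla w\|^2_{L^2(\Omega_\ve)}+\ve\|w\|^2_{L^2(\partial\Omega_\ve)})$, which applied to $w=v^\ve_\a-v^\ve_\b$ turns the order-$1$ gradient bound and the order-$\sqrt\ve$ surface bound into $\|v^\ve_\a-v^\ve_\b\|_{L^2((0,T)\times\Omega_\ve)}\le C\ve$. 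Thus $\ve^{-1}(v^\ve_\a-v^\ve_\b)$ is also bulk-bounded, so it bulk two-scale converges, all the $v_\a$ coincide with a single $v$, and by Remark \ref{rem:2ds:1} its surface limit is the trace of $v_{1,\a}-v_{1,\b}$.

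I would then test (\ref{eq:cdb1}) with $\phi^\ve_\a=\phi^\ve+\ve\phi^\ve_{1,\a}$ (smooth, compactly supported in $x$, vanishing at $T$, $Y$-periodic in $y$), integrating the diffusion term by parts and replacing the conormal flux by the Neumann datum, so that every boundary contribution becomes a surface integral of the coupling. The two algebraic identities that drive everything, to be established first exactly as in Remark \ref{rem:div} but now using the perforated spectral problems (\ref{eq:scp:b})-(\ref{eq:ascp:b}), are that the factorized drift is divergence free in the fluid part, $\div_y\tilde b_\a=0$ in $Y^0$, and that the reaction is carried entirely by its conormal trace, $\tilde b_\a\cdot n=\sum_\b(\Pi_{\a\b}\v^*_\a\v_\b-\Pi^*_{\a\b}\v_\a\v^*_\b)$ on $\partial\Sigma^0$; summing either over $\a$ gives zero. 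Choosing $\phi^\ve\equiv0$ and letting $\ve\to0$, the surviving $\mathcal O(1)$ terms (the coupling integral handled by surface two-scale convergence) produce the weak form of the cell problem (\ref{eq:3:brxn:cellpb}). Its solvability is the perforated analogue of Lemma \ref{lem:fh}, on $(H^1_\#(Y^0))^N/(\R\times\mathds{1})$: inserting the two boundary identities, the compatibility condition collapses to an identity that holds automatically, and writing the same compatibility for the $v_{1,\a}$-problem pins the drift down to (\ref{eq:cdb:drift}). Linearity then gives the separation of variables $v_{1,\a}=\sum_i\partial_{x_i}v\,\omega_{i,\a}$ claimed in (\ref{eq:3:brxn:2scl}).

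The genuinely singular step is the second one, $\phi^\ve_{1,\a}\equiv0$, where the convective and the surface reaction terms are both of order $\ve^{-1}$. After an integration by parts in time (which creates the drift term $\ve^{-1}\v^\ve_\a\v^{*\ve}_\a\rho^\ve_\a v^\ve_\a b^*\cdot\nx\phi^\ve$) and in space in the convection, the vanishing of $\div_y\tilde b_\a$ kills the interior singular contribution and leaves a boundary term $\ve^{-1}\int_{\partial\Omega_\ve}(\tilde b^\ve_\a\cdot n)v^\ve_\a\phi^\ve$; summing over $\a$ and adding the surface reaction term, the trace identity for $\tilde b_\a\cdot n$ makes the integrand cancel identically, which is the porous counterpart of (\ref{eq:vf:div}). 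The only remaining $\ve^{-1}$ term is the drift-versus-convection combination $\ve^{-1}v^\ve_\a(\v^\ve_\a\v^{*\ve}_\a\rho^\ve_\a b^*-\tilde b^\ve_\a)\cdot\nx\phi^\ve$, which I would absorb exactly as in (\ref{eq:vf:rerng})-(\ref{eq:vf:sing}) through the auxiliary corrector $\Xi$ solving $-\Delta\Xi=\sum_\a(\v_\a\v^*_\a\rho_\a b^*-\tilde b_\a)$ in $Y^0$ (well posed by (\ref{eq:cdb:drift}) and the normalization); the rearrangement produces bulk difference terms $\ve^{-1}(v^\ve_\a-v^\ve_\b)$ which now converge thanks to the interior bound established above. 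Passing to the limit yields a scalar equation for $v$; symmetrizing the emerging tensor (legitimate since it is contracted with the symmetric Hessian $\nabla\nabla v$) and substituting the cell-problem variational identity tested against $\omega_{j,\a}$ converts the non-symmetric convective and reactive pieces into the surface quadratic form, giving precisely (\ref{eq:3:brxn:disp}). Positive definiteness of $\mathcal D$ follows as in Lemma \ref{lem:disp}, the coupling contribution being now a non-negative surface integral because $\Pi_{\a\b}\v^*_\a\v_\b\le0$ for $\a\ne\b$, and this uniqueness promotes the subsequential convergence to convergence of the entire sequence. The main obstacle is exactly the mismatch between surface and bulk control of the species differences: the whole scheme closes only because the perforated Poincar\'e inequality restores interior smallness of $\ve^{-1}(v^\ve_\a-v^\ve_\b)$, while the divergence-free property in $Y^0$ together with the conormal trace identity on $\partial\Sigma^0$ annihilate the worst singular terms before the corrector $\Xi$ disposes of the rest.
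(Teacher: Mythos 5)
Your proposal is correct and reproduces the paper's architecture almost step for step: Conca's perforated Poincar\'e inequality to convert the order-$\sqrt{\ve}$ surface control of the differences into the bulk estimate $\|v^\ve_\a-v^\ve_\b\|_{L^2((0,T)\times\Omega_\ve)}\le C\ve$, identification of all bulk limits with a single $v$, and then the two-step oscillating-test-function argument of Theorem \ref{thm:hom} (cell problem, drift from the Fredholm compatibility condition, auxiliary corrector $\Xi$, symmetrization via the cell variational identity). The one place where you genuinely diverge is the identification of the \emph{surface} two-scale limit of $\ve^{-1}(v^\ve_\a-v^\ve_\b)$. The paper does this with Lemma \ref{lem:tech}: a zero-mean surface test function $\phi$ is lifted to a divergence-free field $\theta$ with $\theta\cdot n=\phi$, the surface integral is converted into a bulk one by the divergence theorem, and the limit is computed from the already-known gradient limits, so no trace theory for two-scale limits is ever invoked. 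You instead use the bulk two-scale convergence of $\ve^{-1}(v^\ve_\a-v^\ve_\b)$ (available thanks to the Poincar\'e upgrade) together with the bulk/surface matching of Remark \ref{rem:2ds:1}. This closes, but with a caveat you should make explicit: such matching is \emph{false} for sequences that are merely $L^2$-bounded in the bulk and on the surface (the measures ${\rm d}x$ and $\ve\,{\rm d}\sigma_\ve$ are mutually singular, and a sequence concentrating near $\partial\Omega_\ve$ has unrelated limits); the result of \cite{ADH} needs in addition a bound on $\ve\nabla$ of the sequence. Here $\ve\nabla\big[\ve^{-1}(v^\ve_\a-v^\ve_\b)\big]=\nabla(v^\ve_\a-v^\ve_\b)$ is indeed bounded in $L^2$ by (\ref{eq:cdb1:ap}), and that same bound identifies the $y$-gradient of the bulk limit with $\ny(v_{1,\a}-v_{1,\b})$, so your route is legitimate once this hypothesis is stated; the paper's lifting lemma buys exactly the freedom of not needing it. For the rest, your two algebraic identities $\div_y\tilde b_\a=0$ in $Y^0$ and $\tilde b_\a\cdot n=\sum_\b\big(\Pi_{\a\b}\v^*_\a\v_\b-\Pi^*_{\a\b}\v_\a\v^*_\b\big)$ on $\partial\Sigma^0$ are correct (they follow from (\ref{eq:scp:b})-(\ref{eq:ascp:b}) exactly as (\ref{eq:divb}) follows from (\ref{eq:scp})-(\ref{eq:ascp})) and are precisely the content the paper leaves to the reader when it declares the rest ``completely similar'' to Theorem \ref{thm:hom}; you should only add that the auxiliary corrector $\Xi$ must be posed in $Y^0$ with a homogeneous Neumann condition on $\partial\Sigma^0$, so that its solvability follows from (\ref{eq:cdb:drift}) and the normalization, and the boundary terms created when integrating $\ve^2\Delta\Xi^\ve$ by parts over $\Omega_\ve$ vanish.
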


\begin{proof}
As we have $L^2$ bounds on the solution sequence, we have the existence of a
subsequence and a two-scale limit, say $(v_\a)_{1\le\a\le N}\in
L^2((0,T);L^2(\R^d))^N$ such that
\begin{equation}
\label{eq:cpb:2scl}
v^\ve_\a\ts v_\a
\end{equation}
for every $1\le\a\le N$. For $w\in H^1(\Omega_\ve)$, consider the following
Poincar\'e type inequality derived in \cite{Co:85}:
\begin{equation}
\label{eq:Co:poin}
\|w\|^2_{L^2(\Omega_\ve)}\le C\Big(\ve^2\|\nabla w\|^2_{L^2(\Omega_\ve)} +
\ve\|w\|^2_{L^2(\partial\Omega_\ve)}\Big).
\end{equation}
Taking $\dsp w = \frac{1}{\ve}\Big(v^\ve_\a-v^\ve_\b\Big)$, we deduce from
(\ref{eq:Co:poin}) and a priori estimates (\ref{eq:cdb1:ap}) that
\begin{equation}
\label{eq:cpb:diff}
\dsp\sum_{\a=1}^d\sum_{\b=1}^d\|v^\ve_\a-v^\ve_\b\|_{L^2((0,T)\times\Omega_\ve)}\le
C\,\ve.
\end{equation}
The above estimate (\ref{eq:cpb:diff}) implies that the limits obtained in
(\ref{eq:cpb:2scl}) do match i.e., $v_\a=v$ for every $1\le\a\le N$. The $H^1$
a priori estimate in space as in (\ref{eq:cdb1:ap}) does imply that $v\in
L^2((0,T);H^1(\R^d))$ and that there exist limits $v_{1,\a}\in
L^2((0,T)\times\R^d;H^1_\#(Y^0))$ such that
\begin{equation}
\label{eq:cdb:corr}
\nabla v^\ve_\a \ts \nx v + \ny v_{1,\a}
\end{equation}
for every $1\le\a\le N$. In order to arrive at the two-scale limit of the
coupled term on the boundary, we use Lemma \ref{lem:tech} below. 
Taking $\phi$ from (\ref{eq:tech:pb}) as the test function,
consider the following expression with the coupled term:
$$
\ve \int_0^T\int_{\partial\Omega_\ve} \frac{1}{\ve}\Big( v^\ve_\a -
v^\ve_\b\Big) \phi\left(t,x-\frac{b^* t}{\ve},\frac{x}{\ve}\right) {\rm
d}\sigma_\ve(x) {\rm d}t
$$
$$
=
\int_0^T\int_{\Omega_\ve} \div\left((v^\ve_\a - v^\ve_\b)
\theta\left(t,x-\frac{b^* t}{\ve},\frac{x}{\ve}\right)\right) {\rm d}x {\rm d}t
$$
$$
= \int_0^T\int_{\Omega_\ve}\Big(\nabla v^\ve_\a - \nabla v^\ve_\b\Big)
\cdot \theta\left(t,x-\frac{b^* t}{\ve},\frac{x}{\ve}\right){\rm d}x {\rm d}t
$$
$$
+ \int_0^T\int_{\Omega_\ve}\Big(v^\ve_\a - v^\ve_\b\Big) 
\left(\div_x \theta\right)\left(t,x-\frac{b^*t}{\ve},\frac{x}{\ve}\right) {\rm d}x {\rm d}t
$$
$$
\ts \int_0^T\int_{\mathbb{R}^d}\int_{Y^0} \Big(\nabla_y v_{1,\a} - \nabla_y
v_{1,\b}\Big) \cdot \theta{\rm d}y {\rm d}x {\rm d}t
$$
$$
= \int_0^T\iR\int_{\partial\Sigma^0} \Big( v_{1,\a} - v_{1,\b}\Big) \theta
\cdot n {\rm d}\sigma(y) {\rm d}x {\rm d}t =
\displaystyle\int_0^T\iR\int_{\partial\Sigma^0} \Big( v_{1,\a} - v_{1,\b}\Big)
\phi {\rm d}\sigma(y) {\rm d}x {\rm d}t,
$$
which implies that
$$
\dsp\frac{1}{\ve}\Big(v^\ve_\a-v^\ve_\b\Big) \tss v_{1,\a} - v_{1,\b}\:\:\mbox{
for every }1\le\a,\b\le N.
$$
The rest of the proof is completely similar to the proof of Theorem \ref{thm:hom}. 
We safely leave it to the reader. 
\end{proof}

We finish by stating a technical lemma which was useful in the proof of 
Theorem \ref{thm:cdb:hom}.

\begin{lemma}
\label{lem:tech}
For a function $\phi(t,x,y) \in
L^2((0,T)\times\mathbb{R}^d\times\partial\Sigma^0)$ such that
\begin{equation}
\label{eq:tech:comp}
\int\limits_{\partial\Sigma^0} \phi(t,x,y) {\rm d}\sigma(y) = 0 \hspace{1
cm}\forall\:(t,x)\in(0,T)\times \mathbb{R}^d,
\end{equation}
there exists a vector field
$\theta(t,x,y) \in L^2((0,T)\times\mathbb{R}^d;L^2_\#(Y^0))^d$ such that
\begin{equation}
\label{eq:tech:pb}
\left\{
 \begin{array}{ll}
  \div_y \theta = 0 & \textrm{in} \: \: Y^0,\\[0.2cm]
  \theta \cdot n = \phi & \textrm{on} \: \: \partial\Sigma^0,\\[0.2 cm]
  y \to \theta(t,x,y) & Y\mbox{-periodic.}
 \end{array}\right.
\end{equation}
\end{lemma}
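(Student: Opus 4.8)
The plan is to realize $\theta$ as the gradient of a scalar potential obtained by solving an auxiliary Neumann problem in the cell, one frozen pair $(t,x)$ at a time, and then to upgrade the pointwise construction to a global bound in the Bochner space $L^2((0,T)\times\R^d;L^2_\#(Y^0))^d$ using the linearity of the solution operator.

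First I would fix $(t,x)\in(0,T)\times\R^d$ and look for a potential $\xi(t,x,\cdot)\in H^1_\#(Y^0)/\R$ solving $\Delta_y\xi=0$ in $Y^0$ with Neumann datum $\ny\xi\cdot n=\phi$ on $\partial\Sigma^0$ and periodic conditions on the outer faces of $Y$. Its variational formulation reads: find $\xi\in H^1_\#(Y^0)/\R$ such that
$$
\int_{Y^0}\ny\xi\cdot\ny\psi\,{\rm d}y=\int_{\partial\Sigma^0}\phi\,\psi\,{\rm d}\sigma(y)\qquad\text{for all }\psi\in H^1_\#(Y^0)/\R,
$$
where the outer periodic faces contribute nothing because opposite-face fluxes cancel by $Y$-periodicity. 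The compatibility condition (\ref{eq:tech:comp}) states exactly that the right-hand side annihilates the constants, so it defines a continuous linear form on the quotient (continuity following from the trace inequality $\|\psi\|_{L^2(\partial\Sigma^0)}\le C\|\psi\|_{H^1(Y^0)}$). Since the fluid part $Y^0$ is connected, the Poincar\'e--Wirtinger inequality makes the Dirichlet form coercive on $H^1_\#(Y^0)/\R$, and the Lax--Milgram lemma yields a unique $\xi$.

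I would then set $\theta:=\ny\xi$. By construction $\div_y\theta=\Delta_y\xi=0$ in $Y^0$, and $\theta$ is $Y$-periodic as the gradient of a periodic function; reading the variational identity against all test functions gives $\theta\cdot n=\phi$ on $\partial\Sigma^0$ in the sense of the normal trace, which is well defined in $H^{-1/2}(\partial\Sigma^0)$ precisely because $\div_y\theta=0\in L^2$, and which is identified with $\phi\in L^2$ by the equation. This settles (\ref{eq:tech:pb}) for frozen $(t,x)$. Testing the identity with $\psi=\xi$ and using the same trace and Poincar\'e--Wirtinger inequalities gives the uniform bound $\|\theta(t,x,\cdot)\|_{L^2(Y^0)}\le C\|\phi(t,x,\cdot)\|_{L^2(\partial\Sigma^0)}$ with $C$ independent of $(t,x)$.

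The remaining point is the global regularity of $\theta$, and this is where the only genuine care is needed. Because the cell solution map $\phi(t,x,\cdot)\mapsto\theta(t,x,\cdot)$ is a fixed bounded linear operator from $L^2(\partial\Sigma^0)$ into $L^2(Y^0)^d$, it sends the measurable $L^2(\partial\Sigma^0)$-valued function $(t,x)\mapsto\phi(t,x,\cdot)$ to a measurable $L^2(Y^0)^d$-valued function; squaring the pointwise estimate and integrating over $(0,T)\times\R^d$ then yields $\|\theta\|_{L^2((0,T)\times\R^d;L^2(Y^0))}\le C\|\phi\|_{L^2((0,T)\times\R^d\times\partial\Sigma^0)}<\infty$, which places $\theta$ in the required space. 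I expect the main obstacle to be not the cell construction itself, which is standard, but the bookkeeping that guarantees both the uniform-in-$(t,x)$ estimate and the measurable parameter dependence needed to land in the Bochner space; the weak interpretation of the normal-trace condition $\theta\cdot n=\phi$ is the one analytic subtlety, and it is handled automatically by the divergence-free property.
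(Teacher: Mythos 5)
Your proposal is correct and follows essentially the same route as the paper: solve the cell Neumann problem $\Delta_y\xi=0$ in $Y^0$ with $\nabla_y\xi\cdot n=\phi$ on $\partial\Sigma^0$ and periodic conditions, using (\ref{eq:tech:comp}) as the Fredholm compatibility condition, and take $\theta=\nabla_y\xi$. Your additional bookkeeping (Lax--Milgram on the quotient space, the uniform bound, and the measurable dependence on $(t,x)$ needed for the Bochner space) simply fills in details the paper leaves implicit.
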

\begin{proof}
Consider the following stationary diffusion problem posed in the unit cell:
$$
\begin{array}{ll}
\Delta_y\xi(y)=0 & \mbox{ in }Y^0,\\[0.2 cm]
\nabla_y\xi\cdot n = \phi& \mbox{ on }\partial\Sigma^0,
\end{array}
$$
with $Y$-periodic boundary conditions and the Neumann data $\phi$ satisfying
(\ref{eq:tech:comp}). The existence and uniqueness of $\xi\in H^1_{\#}(Y^0)/\R$
is guaranteed for the above problem as (\ref{eq:tech:comp}) is indeed the
compatibility condition from the Fredholm alternative. Choosing
$\theta=\nabla_y\xi$ gives one possible solution for (\ref{eq:tech:pb}).
\end{proof}

\medskip

{\bf Acknowledgements.} This work was initiated during the PhD thesis of H.H.
This work was partially supported by the GdR MOMAS from the CNRS. G.A. is a member
of the DEFI project at INRIA Saclay Ile-de-France. H.H. acknowledges the
support of the ERC grant MATKIT.


\signga

\signhh

\end{document}